\def\addots{\mathinner{\mkern1mu\raise1pt\hbox{.}\mkern2mu\raise4pt\hbox{.}
     \mkern2mu\raise7pt\vbox{\kern7pt\hbox{.}}\mkern1mu}}
\newtheorem{remark}{{\it Remark}}[section]
\begin{document}

\title{On the functional equation $\displaystyle \alpha\bf{u}+\mathcal{C}\star(\chi \bf{u})=\bf{f}$}

\author{Philippe Ryckelynck\footnotemark[1]~~ Laurent Smoch\footnotemark[1]} 

\thanks{{ULCO, LMPA, F-62100 Calais, France.\hspace{1cm}e-mail: \{ryckelyn,smoch\}@lmpa.univ-littoral.fr\\
\indent \indent Univ Lille Nord de France, F-59000 Lille, France. CNRS, FR 2956, France.}}

\maketitle
\baselineskip15pt

\begin{abstract}
We study in this paper the functional equation 
\begin{center}
$\displaystyle \alpha \mathbf{u}(t)+\mathcal{C}\star(\chi \mathbf{u})(t)=\mathbf{f}(t)$
\end{center}
where $\alpha\in\mathbb{C}^{d\times d}$, $\mathbf{u},\mathbf{f}:\mathbb{R}\rightarrow\mathbb{C}^d$, $\mathbf{u}$ being unknown. 
The term $\mathcal{C}\star(\chi \mathbf{u})(t)$ denotes the discrete convolution of an almost zero matricial 
mapping $\mathcal{C}$ with discrete support together with the product of $\mathbf{u}$ and the characteristic 
function $\chi$ of a fixed segment. 
This equation combines some aspects of recurrence equations and/or delayed functional equations, so that we may 
construct a matricial based framework to solve it. 
We investigate existence, unicity and determination of the solution to this equation. 
In order to do this, we use some new results about linear independency of monomial words in matrix algebras.
\end{abstract}

\begin{keywords}
Difference equations, generalized derivatives, matricial polynomial functions, matrix sequences
\end{keywords}

\begin{AMS}
39A06, 39A70, 39A12
\end{AMS}

\section{Introduction}

Let $[t_0,t_f]$ be some interval of time and $\chi$ its characteristic function, and $\varepsilon>0$ a fixed time delay. 
Let $(c_k)_{k\in\mathbb{Z}}$ be an almost zero sequence of matrices in $\mathbb{C}^{d\times d}$ where $d$ 
denotes the ``physical'' dimension. 
We define next $\mathcal{C}:\mathbb{R}\rightarrow\mathbb{C}^{d\times d}$ by $\mathcal{C}(t)=\left\{\begin{array}{ll}
c_{-t/\varepsilon} & \mbox{ if }t\in \varepsilon\mathbb{Z}\\
0 & \mbox{otherwise}
\end{array}\right.$. 
We denote by $I_d$ the identity matrix of size $d$ and we fix throughout a matrix $\alpha\in\mathbb{C}^{d\times d}$. 
We have the discrete convolution product over $\varepsilon\mathbb{Z}$:
\begin{center}
$\displaystyle (\mathbf{u}\star \mathbf{v})(t)=\sum_{s\in \varepsilon\mathbb{Z}}\mathbf{u}(s)\mathbf{v}(t-s)$, $t\in\mathbb{R}$
\end{center}
where $\mathbf{u}$ and $\mathbf{v}$ are two mappings defined in $\mathbb{R}$ with matrix values of format 
$(d\times d,d\times 1)$ or $(1\times d,d\times 1)$.\\
\\
This paper is devoted to the study of the following functional equation
\begin{equation}
\alpha \mathbf{u}+\mathcal{C}\star(\chi \mathbf{u})=\mathbf{f}
\label{eq1}
\end{equation}
which may be rewritten equivalently as
\begin{equation}
\alpha \mathbf{u}(t)+\sum_{k\in\mathbb{Z}}c_k\mathbf{u}(t+k\varepsilon)\chi(t+k\varepsilon)=\mathbf{f}(t),
\label{eq2}
\end{equation}
where $\mathbf{u},\mathbf{f}:\mathbb{R}\rightarrow\mathbb{C}^d$, $\mathbf{u}$ being unknown. 
The sum in the left hand-side of (\ref{eq2}) will be denoted throughout the paper by
 $\Box\mathbf{u}=\mathcal{C}\star(\chi \mathbf{u})$, where
the operator $\Box$ may occur in the context of discrete calculus of variations (see \cite{RS1,RS2}). 
Equation (\ref{eq1}) may be thought as a discrete version of the differential equation
 $\alpha\mathbf{u}(t)+\mathbf{u}'(t)=\mathbf{f}(t)$ where $\mathbf{u}'$ has been replaced by $\Box$. 
An analogous functional equation appears in the field of time-scale calculus and has led to many works, 
see for instance \cite{ABOP,Hi} and the literature therein. 
However, we do not require here that the operator $\Box$ behaves as a derivative.\\
\\
We define $N$ as the largest integer such that $c_N\neq 0$ or $c_{-N}\neq 0$. 
For example, when $N=1$, equation (\ref{eq2}) rewrites as
\begin{equation}
\mathbf{f}(t)-\alpha\mathbf{u}(t)=\left\{\begin{array}{ll}
0 & \mbox{ if }t<t_0-\varepsilon\\
c_1 \mathbf{u}(t+\varepsilon) & \mbox{ if }t_0-\varepsilon\leq t<t_0\\
c_1 \mathbf{u}(t+\varepsilon)+c_0\mathbf{u}(t) & \mbox{ if }t_0\leq t<t_0+\varepsilon\\
c_1 \mathbf{u}(t+\varepsilon)+c_0\mathbf{u}(t)+c_{-1}\mathbf{u}(t-\varepsilon) & \mbox{ if }t_0+\varepsilon\leq t\leq t_f-\varepsilon\\
c_0\mathbf{u}(t)+c_{-1}\mathbf{u}(t-\varepsilon) & \mbox{ if }t_f-\varepsilon<t\leq t_f\\
c_{-1}\mathbf{u}(t-\varepsilon) & \mbox{ if }t_f<t\leq t_f+\varepsilon\\
0 & \mbox{ if }t_f+\varepsilon<t
\end{array}\right..
\label{Nequal1}
\end{equation}
In general, the equations (\ref{eq2}) may be thought as a mixture between recurrence equations and delayed functional equations.

We focus in this paper on the following two problems. 
What are the analytical properties of functions of the shape $\Box\mathbf{u}$ and is the functional equation (\ref{eq1}) well-posed?

The paper is organized as follows. 
In Section 2 we provide some analytical properties of the functions $\Box\mathbf{u}$. 
Section 3 presents some technical lemmas for matricial linear recurrences. 
Section 4 is concerned with the non-vanishing property for entries and determinants of matricial polynomial functions. 
It ensures that the linear compatibility equations extracted when solving (\ref{eq1}) are Cramer for the specific case $N=1$. 
We prove in Sections 5 and 6 that (\ref{eq1}) is well-posed if $\alpha$ is invertible, with proper accuracies, according to 
the respective cases $N=1$ and arbitrary $N$. 
The case $\alpha=0$ is also investigated, and stands for the characterization of the range of the operator $\Box$. 
At last, Section 7 gives some concluding remarks and perspectives.

\section{Analytical properties of the functions $\Box\mathbf{u}$}

In order to comprehend equation (\ref{eq1}) and present some robust methods for solving it, we provide first some 
features of the operator $\Box$.
\begin{theorem}
Let $\nu\in\mathbb{N}\cup\{\infty\}$.
\begin{enumerate}
\item The operator $\Box$ maps the space of the functions $\mathbf{u}:\mathbb{R}\rightarrow \mathbb{C}^d$ of 
piecewise-$\mathcal{C}^\nu$ regularity, to the space of compactly supported functions
 $\mathbf{v}:\mathbb{R}\rightarrow \mathbb{C}^d$ of piecewise-$\mathcal{C}^\nu$ regularity, such that 
$\mbox{Supp}(\mathbf{v})\subset[t_0-N\varepsilon,t_f+N\varepsilon]$. 
\item If $\mathbf{u}:\mathbb{R}\rightarrow \mathbb{C}^d$ has $p$ points of discontinuity on $\mathbb{R}$, then $\Box\mathbf{u}$
 has at most $(4N+2)+(2N+1)p$ points of discontinuity on $\mathbb{R}$.
\item If $c_N$ or $c_{-N}$ is invertible then the kernel of $\Box$ consists of the functions which vanish on $[t_0,t_f]$. 
\item If $c_N$ or $c_{-N}$ is invertible then, for all $\mathbf{u}:\mathbb{R}\rightarrow \mathbb{C}^d$,
 $\Box\mathbf{u}$ is measurable, 
(respectively integrable, of piecewise-$\mathcal{C}^\nu$ regularity) on $\mathbb{R}$ if and only if $\mathbf{u}$ is measurable 
(respectively integrable, of piecewise-$\mathcal{C}^\nu$ regularity) on $[t_0,t_f]$.
\end{enumerate}
\end{theorem}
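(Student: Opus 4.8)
The plan is to treat the four assertions in turn, all of them flowing from the explicit expansion $\Box\mathbf{u}(t)=\sum_{k\in\mathbb{Z}}c_k\mathbf{u}(t+k\varepsilon)\chi(t+k\varepsilon)$, which is a \emph{finite} sum since $c_k=0$ for $|k|>N$. For assertion (1), I would first note that the support statement is immediate: the term indexed by $k$ is supported where $\chi(t+k\varepsilon)\neq 0$, i.e. on $[t_0-k\varepsilon,\,t_f-k\varepsilon]$, and taking the union over $-N\le k\le N$ gives $[t_0-N\varepsilon,\,t_f+N\varepsilon]$. For the regularity, each summand $t\mapsto c_k\mathbf{u}(t+k\varepsilon)\chi(t+k\varepsilon)$ is a product of a piecewise-$\mathcal{C}^\nu$ function (a shift of $\mathbf{u}$) with the piecewise-constant function $\chi$, hence piecewise-$\mathcal{C}^\nu$; a finite sum of such functions is again piecewise-$\mathcal{C}^\nu$.

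For assertion (2), I would count breakpoints. The function $\chi(t+k\varepsilon)$ contributes the two breakpoints $t_0-k\varepsilon$ and $t_f-k\varepsilon$; the shifted function $\mathbf{u}(t+k\varepsilon)$ contributes $p$ breakpoints, one for each discontinuity of $\mathbf{u}$ shifted by $-k\varepsilon$. So each of the $2N+1$ summands (for $k=-N,\dots,N$) has at most $2+p$ breakpoints, giving at most $(2N+1)(2+p)=(4N+2)+(2N+1)p$ breakpoints for the sum. I would remark that distinct summands might share breakpoints, so the bound is not tight, which is why the statement says ``at most.''

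For assertions (3) and (4), the key idea is that when $c_N$ (say) is invertible, the operator $\Box$, restricted appropriately, is triangular with invertible diagonal. Concretely, for $t$ in the leftmost nontrivial band $[t_0-N\varepsilon,\,t_0-(N-1)\varepsilon)$ only the $k=N$ term survives, so $\Box\mathbf{u}(t)=c_N\mathbf{u}(t+N\varepsilon)$, whence $\mathbf{u}$ is determined on $[t_0,t_0+\varepsilon)$ by $\Box\mathbf{u}$ on that band (invert $c_N$), and inductively one marches across $[t_0,t_f]$ band by band, each new band introducing exactly one new leading term $c_N\mathbf{u}(t+N\varepsilon)$ while all lower-index terms involve values of $\mathbf{u}$ already determined. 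This shows $\Box\mathbf{u}=0$ forces $\mathbf{u}\equiv 0$ on $[t_0,t_f]$, and conversely if $\mathbf{u}$ vanishes on $[t_0,t_f]$ then every surviving summand is zero (since $\chi(t+k\varepsilon)\neq 0$ precisely when $t+k\varepsilon\in[t_0,t_f]$), giving assertion (3). The case where $c_{-N}$ is invertible is symmetric, marching from the right. For assertion (4), the ``only if'' direction in each regularity class follows because $\mathbf{u}|_{[t_0,t_f]}$ is reconstructed from $\Box\mathbf{u}$ by the finitely many operations above — shifts, multiplication by the fixed invertible matrix $c_N^{-1}$, subtraction of already-reconstructed pieces — each of which preserves measurability, integrability (on a bounded interval) and piecewise-$\mathcal{C}^\nu$ regularity; the ``if'' direction is just assertion (1) together with the observation that outside $[t_0-N\varepsilon,t_f+N\varepsilon]$ the function $\Box\mathbf{u}$ vanishes and that only $\mathbf{u}|_{[t_0,t_f]}$ enters the formula.

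I expect the main obstacle to be bookkeeping the band-by-band induction in (3)--(4) cleanly: one must carefully track which translates $t+k\varepsilon$ lie inside $[t_0,t_f]$ for $t$ in each elementary band of width $\varepsilon$, check that on the first band of each sweep exactly the extreme-index coefficient $c_{\pm N}$ appears, and verify that when $t_f-t_0$ is not an integer multiple of $\varepsilon$ the ``partial'' bands at the ends do not disrupt the triangular structure. Everything else is routine manipulation of finite sums and of the definition of piecewise-$\mathcal{C}^\nu$ regularity.
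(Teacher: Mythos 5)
Your proposal is correct and follows essentially the same route as the paper: both arguments rest on the explicit band-by-band expansion of $\Box\mathbf{u}(t)=\sum_{|k|\le N}c_k\mathbf{u}(t+k\varepsilon)\chi(t+k\varepsilon)$, count the breakpoints $t_0-k\varepsilon$, $t_f-k\varepsilon$ and the shifted discontinuities of $\mathbf{u}$ for assertion 2, and prove assertions 3 and 4 by the same triangular march across $[t_0,t_f]$ in steps of $\varepsilon$ using the invertibility of $c_{\pm N}$. The only cosmetic difference is that you bound the discontinuities summand by summand as $(2N+1)(2+p)$ rather than splitting into the $4N+2$ boundary points plus $(2N+1)p$ shifted ones, which yields the identical bound.
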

\begin{proof}
The main ingredient of the proof is the explicit formula for $\Box\mathbf{u}(t)$ provided in \cite{RS1}:
{\footnotesize \begin{equation}
\left\{\begin{array}{cc}
0 & \mbox{if }t<t_0-N\varepsilon\\
c_N\mathbf{u}(t+N\varepsilon) & \mbox{if }t_0-N\varepsilon\leq t<t_0-N\varepsilon+\varepsilon \\
c_{N-1}\mathbf{u}(t+N\varepsilon-\varepsilon)+c_N\mathbf{u}(t+N\varepsilon)& 
\mbox{if }t_0-N\varepsilon+\varepsilon\leq t<t_0-N\varepsilon+2\varepsilon \\
\vdots & \vdots \\
c_{N-p}\mathbf{u}(t+N\varepsilon-p\varepsilon)+\ldots+c_N\mathbf{u}(t+N\varepsilon) &
 \begin{array}{c}\mbox{if }p\in\{0,\ldots,2N-1\}\mbox{ and}\\t_0+(p-N)\varepsilon\leq t<t_0+(p+1-N)\varepsilon\end{array}\\
\vdots & \vdots\\
c_{-N}\mathbf{u}(t-N\varepsilon)+\ldots+c_N\mathbf{u}(t+N\varepsilon) & 
\mbox{if }t_0+N\varepsilon\leq t\leq t_f-N\varepsilon\\
\vdots & \vdots\\
c_{-N}\mathbf{u}(t-N\varepsilon)+\ldots+c_{p-N}\mathbf{u}(t-N\varepsilon+p\varepsilon) &
 \begin{array}{c}\mbox{if }p\in\{2N-1,\ldots,0\}\mbox{ and}\\t_f+(N-p-1)\varepsilon<t\leq t_f+(N-p)\varepsilon\end{array}\\
\vdots & \vdots\\
c_{-N}\mathbf{u}(t-N\varepsilon)+c_{1-N}\mathbf{u}(t-N\varepsilon+\varepsilon) &
 \mbox{if }t_f+N\varepsilon-2\varepsilon<t\leq t_f+N\varepsilon-\varepsilon \\
c_{-N}\mathbf{u}(t-N\varepsilon) & \mbox{if }t_f+N\varepsilon-\varepsilon<t\leq t_f+N\varepsilon\\
0 & \mbox{if }t_f+N\varepsilon<t
\end{array}
\right.
\label{CMA}
\end{equation}}
Assertion 1. is an obvious consequence of formula (\ref{CMA}). 
Next, let $\mathbf{u}:\mathbb{R}\rightarrow\mathbb{C}^d$, then formula (\ref{CMA}) shows that the $(4N+2)$ points
 $t_0+k\varepsilon$ and $t_f+k\varepsilon$, $|k|\leq N$ may be points of discontinuity of $\Box\mathbf{u}$. 
Moreover, if $t_{\delta}$ is any point of discontinuity of $\mathbf{u}$, the values $t_{\delta}+k\varepsilon$, $|k|\leq N$, give rise
 in general to points of discontinuity of $\Box\mathbf{u}$. 
Lastly, since there does not exist any other point of discontinuity of $\Box\mathbf{u}$, assertion 2. is proved. 
Let us prove assertion 3., i.e. the conditional injectivity of $\Box$. 
We first notice help to (\ref{CMA}) that if $\mathbf{u}$ vanishes in $[t_0,t_f]$, then all the terms occuring in
 (\ref{CMA}) are equal to 0 so that $\Box\mathbf{u}$ is zero everywhere. 
Conversely, suppose that $\det(c_N)\neq 0$. 
If $\mathbf{u}\in\ker(\Box)$, second row of formula (\ref{CMA}) shows that $\mathbf{u}=0$ in $[t_0,t_0+\varepsilon[$. 
Next, by using this result, the third row of (\ref{CMA}) shows that $\mathbf{u}=0$ in $[t_0+\varepsilon,t_0+2\varepsilon[$ and so on. 
Then we may prove easily that $\mathbf{u}=0$ in 
$[t_0,t_0+\varepsilon[\cup[t_0+\varepsilon,t_0+2\varepsilon[\cup\ldots\cup]t_f-\varepsilon,t_f]$. 
We may proceed similarly when $\det(c_{-N})\neq 0$ by starting from the last but one row of (\ref{CMA}). 
As a consequence, if $\mathbf{u},\mathbf{v}:\mathbb{R}\rightarrow \mathbb{C}^d$, we get 
\begin{center}
$\Box\mathbf{u}=\Box\mathbf{v}\Leftrightarrow\mathbf{u}=\mathbf{v}$ in $[t_0,t_f]$
\end{center}
which proves the third assertion. 
We proceed in the very same way to prove that measurability or integrability 
of $\mathbf{u}$ on $[t_0,t_f]$ is equivalent to the same property for $\Box\mathbf{u}$ on $\mathbb{R}$.
\end{proof}

\section{Preliminary lemmas for matricial linear recurrences}

As we shall see in Sections 5 and 6, solving (\ref{eq1}) queries to deal with matricial nonstationary recurrences of the shape
\begin{equation}
\mathbf{w}_{n+1}=\mathbf{M}_n\mathbf{w}_n+\mathbf{g}_n,~n\in\mathbb{N},
\label{rec_gen}
\end{equation}
where $\mathbf{M}_n$ is a $s\times s$ matrix and $\mathbf{w}_n,\mathbf{g}_n$ are vector 
sequences in $\mathbb{C}^s$, $s$ being a fixed integer. 
Since non-commutative products occur in the explicit formula, let us denote by 
\begin{center}
$\displaystyle \coprod_{k=0}^m\mathbf{M}_k=\mathbf{M}_m\mathbf{M}_{m-1}\ldots \mathbf{M}_0$
\end{center} 
this left-side product.
\begin{lemma}
The solution to the recurrence (\ref{rec_gen}) is given by
\begin{equation}
\mathbf{w}_n=\left(\coprod_{k=0}^{n-1} 
\mathbf{M}_k\right)\mathbf{w}_0+\sum_{k=0}^{n-1}\left(\coprod_{\ell=k+1}^{n-1}\mathbf{M}_\ell\right)\mathbf{g}_k.
\end{equation} 
\label{lem1-sagv}
\end{lemma}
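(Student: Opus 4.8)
The plan is to prove the closed form by induction on $n$, which is the natural strategy for any first-order linear recurrence with variable coefficients. The formula is a discrete analogue of the variation-of-constants formula, so the only genuine care needed is bookkeeping of the non-commutative left-side products $\coprod$, together with the conventions for empty products and empty sums.

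First I would fix conventions: an empty product $\coprod_{k=a}^{b}\mathbf{M}_k$ with $b<a$ equals the identity matrix $I_s$, and an empty sum $\sum_{k=0}^{-1}$ equals $0$. With these, the base case $n=0$ is immediate: the right-hand side reduces to $I_s\,\mathbf{w}_0 + 0 = \mathbf{w}_0$, matching the initial term of the recurrence.

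For the inductive step, I would assume the formula holds for some $n\ge 0$ and compute $\mathbf{w}_{n+1}=\mathbf{M}_n\mathbf{w}_n+\mathbf{g}_n$ by substituting the induction hypothesis. Multiplying $\mathbf{M}_n$ through on the left, the homogeneous term becomes $\mathbf{M}_n\bigl(\coprod_{k=0}^{n-1}\mathbf{M}_k\bigr)\mathbf{w}_0 = \bigl(\coprod_{k=0}^{n}\mathbf{M}_k\bigr)\mathbf{w}_0$, using that $\coprod$ is the left-side product $\mathbf{M}_m\mathbf{M}_{m-1}\cdots\mathbf{M}_0$, so prepending $\mathbf{M}_n$ on the left simply raises the upper index by one. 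Similarly, each summand $\mathbf{M}_n\bigl(\coprod_{\ell=k+1}^{n-1}\mathbf{M}_\ell\bigr)\mathbf{g}_k$ becomes $\bigl(\coprod_{\ell=k+1}^{n}\mathbf{M}_\ell\bigr)\mathbf{g}_k$, so the transformed sum is $\sum_{k=0}^{n-1}\bigl(\coprod_{\ell=k+1}^{n}\mathbf{M}_\ell\bigr)\mathbf{g}_k$. Finally the stray $\mathbf{g}_n$ is exactly the $k=n$ term of a sum running up to $k=n$, since $\coprod_{\ell=n+1}^{n}\mathbf{M}_\ell = I_s$ by the empty-product convention; absorbing it gives $\sum_{k=0}^{n}\bigl(\coprod_{\ell=k+1}^{n}\mathbf{M}_\ell\bigr)\mathbf{g}_k$, and the two pieces together are precisely the claimed expression with $n$ replaced by $n+1$.

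The main obstacle here is not conceptual but notational: one must be scrupulous that the non-commutative products are written and manipulated in the correct order (left multiplication by $\mathbf{M}_n$ corresponds to extending the product at its high-index end), and that the index-shift in the sum and the treatment of the boundary term $\mathbf{g}_n$ are consistent with the empty-product and empty-sum conventions. Once these conventions are pinned down, each step is a direct substitution, and the induction closes.
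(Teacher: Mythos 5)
Your proof is correct and follows exactly the route the paper takes: induction on $n$ with the empty-product and empty-sum conventions handling the base case and the boundary term $\mathbf{g}_n$. The paper merely asserts that the induction is straightforward, whereas you have written out the left-multiplication bookkeeping explicitly; the content is the same.
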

\begin{proof}
The cases $n=0$ and $n=1$ are true and use the conventions of the empty sum and empty (left-)product. 
The proof for $n>1$ is straightforward by using induction. 
\end{proof}
The particular case when $\mathbf{M}$ is independent on $n$ is of some importance and gives rise to
 the solution to stationary matricial recurrence of the shape (\ref{rec_gen}) as follows:
\begin{equation}
\mathbf{w}_n=\mathbf{M}^{n}\mathbf{w}_{0}+\sum_{k=0}^{n-1}\mathbf{M}^{n-k-1}\mathbf{g}_{k}.
\label{sagv}
\end{equation} 

\noindent In the following two lemmas, we deal with sequences of non-commutative matricial polynomials.
\begin{lemma}
Let $d\geq 2$, $\beta,\gamma\in\mathcal{M}_{d,d}(\mathbb{C})$ and $(\delta_n')$ and $(\delta_n'')$ be 
the sequences of matrices defined by the recurrences
\begin{equation}
\delta_{n+1}'=\beta \delta_n'+\gamma \delta_{n-1}',~\delta_{n+1}''=\delta_n''\beta +\delta_{n-1}''\gamma,~n\geq 1
\label{recform}
\end{equation} 
with $\delta_0'=\delta_0''=0$ and $\delta_1'=\delta_1''=I_d$. 
Then, for all $n\geq 2$, we have 
\begin{equation}
\delta_n'=\delta_n''=\sum\beta^{m_1}\gamma^{m_2}\beta^{m_3}\gamma^{m_4}\ldots
\label{poly}
\end{equation}
where the sum is extended over all the multiplets $(m_1,m_2,\ldots,m_n)\in\mathbb{N}^n$ with 
\begin{equation}
m_j\geq 1\mbox{ if }2\leq j\leq n-1\mbox{ and }\sum_{i=1}^{\left[\frac{n}{2}\right]}(m_{2i-1}+2m_{2i})=n-1.
\label{multiplet}
\end{equation}
The number of monomials in the formula (\ref{poly}) is the Fibonacci number $\mathcal{F}_{n}$.
\label{lem2-sagv}
\end{lemma}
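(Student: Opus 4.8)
The plan is to prove the three assertions of Lemma~\ref{lem2-sagv} in order: first the equality $\delta_n'=\delta_n''$, then the closed form (\ref{poly})--(\ref{multiplet}), then the Fibonacci count. For the equality, I would argue by a simultaneous induction on $n$. The primed recurrence multiplies by $\beta,\gamma$ on the \emph{left} while the doubly-primed one multiplies on the \emph{right}; the symmetry I want to exploit is the anti-automorphism given by formal reversal of words. Concretely, if I can show that $\delta_n'$ and $\delta_n''$ are \emph{both} equal to the symmetric polynomial expression in (\ref{poly}), the equality is automatic, so in fact steps one and two should be fused: I would guess the formula (\ref{poly}) and verify by induction that it satisfies \emph{both} recurrences in (\ref{recform}) with the stated initial data.

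For the induction establishing (\ref{poly}), I would let $P_n=\sum \beta^{m_1}\gamma^{m_2}\cdots$ denote the right-hand side, the sum over multiplets $(m_1,\dots,m_n)$ subject to (\ref{multiplet}). The base cases $n=2,3$ I would check by hand (for $n=2$ the only constraint is $m_1=1$, giving $P_2=\beta$; for $n=3$ either $m_1=2,m_2=0$ giving $\beta^2$, or $m_1=0,m_2=1$ — but wait, $m_2\ge 1$ forces the count $m_1+2m_2=2$ to be met with $m_1=0,m_2=1$, giving $\gamma$, so $P_3=\beta^2+\gamma$, consistent with $\delta_3'=\beta\delta_2'+\gamma\delta_1'=\beta^2+\gamma$). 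For the inductive step I would split the sum defining $P_{n+1}$ according to the value of the \emph{last} exponent block and show it reorganizes as $\beta P_n+\gamma P_{n-1}$; symmetrically, splitting according to the \emph{first} block gives $P_n\beta+P_{n-1}\gamma$. The bookkeeping hinge is the reindexing of the constraint $\sum(m_{2i-1}+2m_{2i})=n-1$ under removing a leading (resp. trailing) $\beta$ or $\gamma\gamma$; the case analysis on the parity of $n$ and on which of $\beta$ or $\gamma$ sits in the outermost position is the part that needs care, since the role of odd- and even-indexed exponents swaps.

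For the Fibonacci count, let $a_n$ be the number of admissible multiplets. Peeling off the outermost block as above gives the recurrence $a_{n+1}=a_n+a_{n-1}$ — one family of words ends in $\beta$ (contributing $a_n$ after deleting that $\beta$) and one ends in $\gamma$, which by the constraint must end in $\gamma^2$ hence after deletion contributes $a_{n-1}$ — together with $a_2=1=\mathcal{F}_2$ and $a_3=2=\mathcal{F}_3$ (with the convention $\mathcal{F}_1=\mathcal{F}_2=1$), whence $a_n=\mathcal{F}_n$ for all $n\ge 2$. Alternatively this is immediate once (\ref{poly}) is established, by specializing to $\beta=\gamma=1$ in $\mathbb{C}$, which turns both sides into scalars and gives $\delta_n'=a_n$ while the recurrence (\ref{recform}) becomes the Fibonacci recurrence.

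The main obstacle I anticipate is purely combinatorial: making the reindexing in the inductive step airtight, because deleting a letter from the \emph{left} shifts every exponent's index by one and therefore exchanges the ``$m_{2i-1}$ with coefficient $1$'' and ``$m_{2i}$ with coefficient $2$'' roles, and the side condition $m_j\ge 1$ for interior $j$ interacts delicately with whether the deleted block was a $\gamma^2$ (removing two letters, i.e.\ decreasing $n$ by $2$) or a single $\beta$. I would organize this by treating the right-multiplication recurrence and the left-multiplication recurrence as mirror images under word-reversal, proving the claim for one and transporting it, so that only one such case analysis has to be done in detail.
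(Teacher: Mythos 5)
Your route is genuinely different from the paper's. You propose to take the closed form (\ref{poly}) as given and verify that it satisfies both recurrences in (\ref{recform}) by peeling off the outermost exponent block; the paper goes the other way round: it lets the recurrence generate the words, proves by induction only that $\delta_n'$ is a sum of alternating monomials with positive integer coefficients, obtains the total count (with multiplicity) $\mathcal{F}_n$ by the specialization $\beta=\gamma=I_d$, and then shows every coefficient equals $1$ by the one-line observation that the words contributed by $\beta\delta_n'$ begin with $\beta$ while those contributed by $\gamma\delta_{n-1}'$ begin with $\gamma$, so the two families are disjoint and the cardinalities add Fibonacci-style. The payoff of the paper's order of operations is that it never has to manipulate the constraint (\ref{multiplet}) directly --- the admissible multiplets are simply whatever the recurrence produces --- whereas your verification-first plan stands or falls on exactly the reindexing that you defer as ``the part that needs care.''

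That deferred part is where your proposal has concrete problems. First, the weight bookkeeping in the $\gamma$ case is wrong as stated: a word of $P_{n+1}$ whose outer letter is $\gamma$ descends to $P_{n-1}$ after deleting a \emph{single} $\gamma$, because each $\gamma$ carries weight $2$ in the sum $\sum(m_{2i-1}+2m_{2i})=n-1$; it is false that such a word ``must end in $\gamma^2$'' (for instance $\beta\gamma$ is one of the three words of $P_4$), and deleting $\gamma^2$ would land you in $P_{n-3}$, not $P_{n-1}$. Second, the two splittings are interchanged: peeling the \emph{last} block factors $\beta$ or $\gamma$ on the right and yields $P_n\beta+P_{n-1}\gamma$, while the leading block yields $\beta P_n+\gamma P_{n-1}$. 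Third --- and this is what actually blocks the verification strategy --- your own base case $n=3$ shows that condition (\ref{multiplet}), read literally, excludes $\beta^2$ (it demands $m_2\ge 1$) and leaves $m_n$ unconstrained when $n$ is odd, yet you assert $P_3=\beta^2+\gamma$ anyway. One cannot verify a recurrence for a sum whose index set is not pinned down; you would first have to restate the admissibility condition correctly (total $\beta$-degree plus twice the total $\gamma$-degree equal to $n-1$, interior exponents nonzero, outer exponents allowed to vanish) and only then carry out the peeling. The one piece of your plan that is both correct and complete --- the $\beta=\gamma=1$ specialization for the Fibonacci count --- is precisely the step you share with the paper.
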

\begin{proof}
First, let us mention that formula (\ref{poly}) is true for $n=0$ and $n=1$ by using the empty sum convention and the initial conditions. 
Let us prove now the formula (\ref{poly}) for the sequence $(\delta_n')$. 
Suppose that the result (\ref{poly}) holds for $n$ and $n-1$ and that $\delta_{n-1}'$ and $\delta_n'$ are of lengths
 $\mathcal{F}_{n-1}$ and $\mathcal{F}_{n}$ respectively. 
Then, help to (\ref{poly}) and (\ref{recform}) we see that $\delta_{n+1}'$ is the sum of the words excerpted from the
 sum $\delta_n'$ multiplied on the left by $\beta$ and those from the sum $\delta_{n-1}'$ multiplied on the left by $\gamma$. 
So we have proved by induction that $\delta_n'$ has the shape 
\begin{equation*}
\delta_n'=\sum e_{(n,m_1,m_2,\ldots)}\beta^{m_1}\gamma^{m_2}\beta^{m_3}\gamma^{m_4}\ldots
\end{equation*}
where the coefficients $e_{(n,m_1,m_2,\ldots)}$ are convenient positive integers independent of $\beta$ and $\gamma$. 
Let us note that if $\beta=\gamma=I_d$ then, obviously, $\delta_n'=\mathcal{F}_n\times I_d$ so, the number of monomial
 words in the sum (\ref{poly}) is $\mathcal{F}_n$. 
Let us prove now that all the coefficients $e_{(n,m_1,m_2,\ldots)}$ are in fact equal to 1. 
Indeed, we see that all the $\mathcal{F}_{n-1}$ words coming from $\delta_{n-1}'$ are distinct from the 
$\mathcal{F}_{n}$ words arising from $\delta_n'$ since the first ones begin with $\beta$ while the second ones start with $\gamma$. 
All the words $\delta_{n+1}'$ occuring in (\ref{poly}) being distinct, their cardinality is 
$\mathcal{F}_{n-1}+\mathcal{F}_n=\mathcal{F}_{n+1}$. 
So far, we have proved the result (\ref{poly}) for $\delta_n'$ and the proof for $\delta_n''$ is the same.
\end{proof}
In the following, we set $\delta_n=\delta_n'=\delta_n''$, $\forall n\in\mathbb{N}$.
\begin{lemma}
Let $d\geq 2$, $\beta,\gamma\in\mathcal{M}_{d,d}(\mathbb{C}),(\mathbf{g}_n)_{n}\in(\mathbb{C}^d)^\mathbb{N}$, and let 
$(\mathbf{w}_n)_{n}\in(\mathbb{C}^d)\in\mathbb{N}$ 
the vector sequence satisfying for all $n\geq 1$ the following two-order matricial recurrence equation: 
\begin{equation}
\mathbf{w}_{n+1}=\beta \mathbf{w}_{n}+\gamma \mathbf{w}_{n-1}+\mathbf{g}_{n}.
\label{rec}
\end{equation}
Then for all $n\in\mathbb{N}^\star$ we have
\begin{equation}
\mathbf{w}_{n}=\delta_{n}\mathbf{w}_1+\delta_{n-1}\gamma\mathbf{w}_0+\sum_{k=1}^{n-1}\delta_{n-k}\mathbf{g}_{k}.
\label{rectn}
\end{equation} 
\label{lem4-sagv}
\end{lemma}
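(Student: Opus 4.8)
The plan is to prove \eqref{rectn} by induction on $n$, using the closed form for $\delta_n$ from Lemma~\ref{lem2-sagv} only indirectly, through the defining recurrence \eqref{recform}; in fact the cleanest route is to rely solely on the recurrence $\delta_{n+1}=\beta\delta_n+\gamma\delta_{n-1}$ together with $\delta_0=0$, $\delta_1=I_d$, since those are exactly the structural facts matching \eqref{rec}. First I would check the base cases. For $n=1$ the claimed formula reads $\mathbf{w}_1=\delta_1\mathbf{w}_1+\delta_0\gamma\mathbf{w}_0+\sum_{k=1}^{0}(\cdots)=I_d\mathbf{w}_1+0+0$, which holds by the empty-sum convention. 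For $n=2$ it reads $\mathbf{w}_2=\delta_2\mathbf{w}_1+\delta_1\gamma\mathbf{w}_0+\sum_{k=1}^{1}\delta_{2-k}\mathbf{g}_k=\delta_2\mathbf{w}_1+\gamma\mathbf{w}_0+\delta_1\mathbf{g}_1$, and since $\delta_2=\beta\delta_1+\gamma\delta_0=\beta$ this is $\beta\mathbf{w}_1+\gamma\mathbf{w}_0+\mathbf{g}_1$, which is precisely \eqref{rec} at $n=1$.

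For the inductive step I would assume \eqref{rectn} holds for $n$ and for $n-1$ (a two-step induction, since \eqref{rec} is second order), and compute $\mathbf{w}_{n+1}=\beta\mathbf{w}_n+\gamma\mathbf{w}_{n-1}+\mathbf{g}_n$ by substituting both hypotheses. The $\mathbf{w}_1$-terms combine as $(\beta\delta_n+\gamma\delta_{n-1})\mathbf{w}_1=\delta_{n+1}\mathbf{w}_1$ using \eqref{recform}; the $\mathbf{w}_0$-terms combine as $(\beta\delta_{n-1}+\gamma\delta_{n-2})\gamma\mathbf{w}_0=\delta_n\gamma\mathbf{w}_0$, again by \eqref{recform} (here one uses $n\geq 2$ so that $\delta_{n-2}$ is defined; the case $n=1$ was handled as a base case). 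For the source terms, $\beta\sum_{k=1}^{n-1}\delta_{n-k}\mathbf{g}_k+\gamma\sum_{k=1}^{n-2}\delta_{n-1-k}\mathbf{g}_k+\mathbf{g}_n$: I would reindex so that both sums run over a common range, observe that for $1\leq k\leq n-2$ the coefficient of $\mathbf{g}_k$ is $\beta\delta_{n-k}+\gamma\delta_{n-1-k}=\delta_{n+1-k}$ by \eqref{recform}, the coefficient of $\mathbf{g}_{n-1}$ is $\beta\delta_1=\beta=\delta_2$ (matching $\delta_{n+1-k}$ at $k=n-1$ since $\delta_2=\beta$), and the coefficient of $\mathbf{g}_n$ is $I_d=\delta_1=\delta_{n+1-k}$ at $k=n$. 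Hence the source contribution is exactly $\sum_{k=1}^{n}\delta_{n+1-k}\mathbf{g}_k$, and altogether $\mathbf{w}_{n+1}=\delta_{n+1}\mathbf{w}_1+\delta_n\gamma\mathbf{w}_0+\sum_{k=1}^{n}\delta_{n+1-k}\mathbf{g}_k$, which is \eqref{rectn} at $n+1$.

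The only delicate point is bookkeeping at the boundaries of the summation ranges — making sure the $\mathbf{g}_{n-1}$ and $\mathbf{g}_n$ terms, which are not covered by the generic identity $\beta\delta_{n-k}+\gamma\delta_{n-1-k}=\delta_{n+1-k}$ in the same way (because $\delta_0=0$ and $\delta_{-1}$ is not defined), nonetheless fit the pattern once one uses $\delta_0=0$, $\delta_1=I_d$, $\delta_2=\beta$. This is the step I would write out carefully; everything else is a mechanical rearrangement. An alternative, which avoids the two-step induction and some of this edge-case care, is to recast \eqref{rec} as a first-order system $\mathbf{W}_{n+1}=\mathbf{M}\mathbf{W}_n+\mathbf{G}_n$ in $\mathbb{C}^{2d}$ with $\mathbf{W}_n=\tra{(\tra{\mathbf{w}_n},\tra{\mathbf{w}_{n-1}})}$ and $\mathbf{M}=\begin{pmatrix}\beta&\gamma\\ I_d&0\end{pmatrix}$, apply formula \eqref{sagv}, and then identify the top-left $d\times d$ block of $\mathbf{M}^n$ with $\delta_{n+1}$ (and the top-right block with $\delta_n\gamma$) by an easy induction on the block structure of $\mathbf{M}^n$; reading off the first $d$ coordinates then yields \eqref{rectn} directly. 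I would mention this reformulation but carry out the direct induction, as it is shorter to present.
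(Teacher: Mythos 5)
Your induction is correct, and it is a genuinely different (and slightly more elementary) route than the one taken in the paper. The paper does exactly what you sketch as your ``alternative'': it sets $\mathbf{M}=\begin{pmatrix}\beta&\gamma\\ I_d&0\end{pmatrix}$, proves by induction that $\mathbf{M}^n=\begin{pmatrix}\delta_{n+1}&\delta_n\gamma\\ \delta_n&\delta_{n-1}\gamma\end{pmatrix}$, rewrites \eqref{rec} as a first-order system in $\mathbb{C}^{2d}$, invokes the stationary solution formula \eqref{sagv} (i.e.\ Lemma~\ref{lem1-sagv}), and reads off the first block row. What the paper's route buys is the reuse of already-established machinery and, as a byproduct, the full block structure of $\mathbf{M}^n$ (hence also the consistent expression for $\mathbf{w}_{n-1}$ from the second block row); the boundary bookkeeping you worry about is absorbed into the single induction on $\mathbf{M}^n$. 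What your direct two-step induction buys is self-containedness: no auxiliary $2d\times 2d$ matrix, no appeal to \eqref{sagv}, only the defining recurrence \eqref{recform} with $\delta_0=0$, $\delta_1=I_d$. Your handling of the edge terms is right: for $k=n-1$ the missing $\gamma\delta_{n-1-k}$ contribution is exactly $\gamma\delta_0=0$, so $\beta\delta_1=\delta_2$ still fits the pattern $\delta_{n+1-k}$, and the $\mathbf{g}_n$ term carries $I_d=\delta_1$. With the two base cases $n=1$ and $n=2$ checked, the two-step induction covers all $n\geq 1$. Either proof is acceptable; yours is marginally longer to write out in full but requires no detour through the companion matrix.
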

\begin{proof}
Let us define $\mathbf{M}=\begin{pmatrix}\beta & \gamma\\I_d & 0\end{pmatrix}$. 
We may easily prove by induction that $\mathbf{M}^n=\begin{pmatrix}\delta_{n+1} & \delta_{n}\gamma\\
\delta_{n} & \delta_{n-1}\gamma\end{pmatrix}$, for all $n\in\mathbb{N}^\star$, 
by using the recurrence (\ref{recform}) for the sequence $(\delta_n)$. 
Next, (\ref{rec}) may be rewritten as
\begin{center}
$\begin{pmatrix}\mathbf{w}_{n+1}\\\mathbf{w}_{n}\end{pmatrix}=
\mathbf{M}\begin{pmatrix}\mathbf{w}_{n}\\ \mathbf{w}_{n-1}\end{pmatrix}+\begin{pmatrix}\mathbf{g}_{n}\\0\end{pmatrix}$.
\end{center}
By setting $\mathbf{w}_n'=\begin{pmatrix}\mathbf{w}_{n}\\\mathbf{w}_{n-1}\end{pmatrix}$, 
$\mathbf{g}_n'=\begin{pmatrix}\mathbf{g}_{n}\\0\end{pmatrix}$ and by using a slightly adapted version of formula (\ref{sagv}),
 we get $\mathbf{w}_{n}=\mathbf{M}^{n-1}\mathbf{w}_1+\sum_{k=1}^{n-1}\mathbf{M}^{n-1-k}\mathbf{g}_k$. 
Now, the formula (\ref{rectn}) is obtained by reading the first row of the previous formula. 
\end{proof}

\section{Linear independency of words in matrix algebras}

We shall obtain a result which ensures that the foregoing linear compatiblity equations which arise when solving the 
functional equation (\ref{eq1}) are Cramer for almost all choice of matrices $\alpha,c_{-1},c_0,c_1\in\mathbb{C}^{d\times d}$ if $N=1$.

Let us call a \textit{matricial polynomial function} of two matricial indeterminates ($\zeta,\xi)$ an expression of the following shape
\begin{equation}
\psi(\zeta,\xi)=\sum \kappa_{m_1,m_2,\ldots}\zeta^{m_1}\xi^{m_2}\zeta^{m_3}\xi^{m_4}\ldots
\label{rdl}
\end{equation} 
with coefficients in $\mathbb{C}$. 
As an example, we may cite the sequence of matrices occuring in Lemma \ref{lem2-sagv}. 
Given such a function, for all $(p,q)\in\mathbb{N}^2$, we denote by $K_{pq}$ the sum of all the coefficients
 $\kappa_{m_1,m_2,\ldots}$ such that $m_1+m_3+\ldots=p$ and $m_2+m_4+\ldots=q$. 
Similarly, for all $(p,q,r,s)\in\mathbb{N}^4$, we introduce the sum $\tilde{K}_{p,q,r,s}$ of all the coefficients
 $m_1\times m_2\times\ldots\times\kappa_{m_1,m_2,\ldots}$ such that $m_1+m_3+\ldots=p$, $m_2+m_4+\ldots=q$, and where $r$ and $s$ count
 the occurences of $\zeta$ and $\xi$ respectively in the monomial of (\ref{rdl}) associated to the multiplets $(m_1,m_2,\ldots)$. 
We must have necessarily $|r-s|\leq 1$. 
We shall say that the function (\ref{rdl}) is \textit{generic} when at least one sum $K_{pq}$ and one sum $\tilde{K}_{p,q,r,s}$ 
are nonzero. 
For instance, the matrices $\delta_n$  occuring in (\ref{poly}) are the specializations at $(\beta,\gamma)$ of generic 
polynomials, since the coefficients are either 1 or 0.

Let us give now the main result of this section.
\begin{theorem}
Suppose we are given $l$ generic matricial polynomial functions $\psi^{(k)}(\zeta,\xi)$, and let us set 
$\psi^{(k)}_{i,j}(\zeta,\xi)=(\psi^{(k)}(\zeta,\xi))_{ij}$. 
Then, the set of couples $(\zeta,\xi)$ such that
\begin{center}
$\forall i,j,k$, $\psi^{(k)}_{i,j}(\zeta,\xi)\neq 0$, with $1\leq i,j\leq d$, $1\leq k\leq l$, and\\
$\forall k$, $\det(\psi^{(k)}(\zeta,\xi))\neq 0$, with $1\leq k\leq l$
\end{center}
consists in an open dense subset of $(\mathbb{C}^{d\times d})^2$.
\label{ilpmm}
\end{theorem}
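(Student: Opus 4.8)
The plan is to show that the bad set—the set of pairs $(\zeta,\xi)$ where some entry $\psi^{(k)}_{i,j}$ vanishes or some determinant $\det(\psi^{(k)})$ vanishes—is a finite union of proper algebraic (Zariski-closed) subsets of $(\mathbb{C}^{d\times d})^2\cong\mathbb{C}^{2d^2}$, hence closed with empty interior, so its complement is open and dense. Each entry $\psi^{(k)}_{i,j}(\zeta,\xi)$ is a polynomial in the $2d^2$ matrix coordinates, and so is each determinant; thus its zero locus is automatically Zariski-closed, and the only real content is to prove that \emph{none of these polynomials is identically zero}, i.e. that genericity rules out the degenerate situation. Once each of the finitely many polynomials $\psi^{(k)}_{i,j}$ and $\det(\psi^{(k)})$ is shown to be a nonzero polynomial, standard facts (a nonzero polynomial on $\mathbb{C}^M$ has complement of its zero set open and dense; a finite intersection of open dense sets is open dense) finish the argument.

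First I would handle the entries. Fix $k$ and write $\psi=\psi^{(k)}$. I want to exhibit a single pair $(\zeta_0,\xi_0)$ at which \emph{every} entry $\psi_{i,j}$ is nonzero; that already shows $\psi_{i,j}\not\equiv 0$. The natural choice exploits genericity: since some $K_{pq}\neq 0$, there is a ``total bidegree'' $(p,q)$ that survives after collecting coefficients. The idea is to specialize $\zeta=x A$, $\xi=y B$ for scalars $x,y$ and cleverly chosen fixed matrices $A,B$, so that the monomial $\zeta^{m_1}\xi^{m_2}\cdots$ contributes $x^{m_1+m_3+\cdots}y^{m_2+m_4+\cdots}A^{m_1}B^{m_2}\cdots$; grouping by the exponent pair $(p,q)$, the coefficient of $x^py^q$ in $\psi(xA,yB)$ is a matrix that, for a well-chosen $A,B$ (e.g. a pair generating the full matrix algebra, or a cyclic permutation-type pair together with a rank-one matrix), has all entries nonzero whenever the scalar $K_{pq}\neq 0$. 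Here the auxiliary data $\tilde{K}_{p,q,r,s}$ and the constraint $|r-s|\le 1$ enter: they track how many $\zeta$- versus $\xi$-factors occur, which is exactly what one needs to control whether the words $A^{m_1}B^{m_2}\cdots$ of a given $(p,q)$ can collapse or cancel. Choosing $A,B$ so that distinct words map to linearly independent (or at least entrywise-nonnegative, non-cancelling) matrices makes the $(p,q)$-component visibly nonzero in every entry.

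Next the determinants. Having produced, for each $k$, a specialization at which $\psi^{(k)}$ has all entries nonzero is not enough for $\det\ne 0$; instead I would argue that $\det(\psi^{(k)})$, viewed as a polynomial in the $2d^2$ coordinates, is not identically zero by a separate specialization. The cleanest route: specialize $\xi$ (or $\zeta$) to a matrix making one of the matrix words a scalar multiple of $I_d$—for instance take $\zeta=I_d$ and $\xi$ a generic matrix, reducing $\psi^{(k)}(I_d,\xi)$ to an ordinary one-variable matrix polynomial $\sum_q K'_q\,\xi^{q}$ whose leading or lowest term has nonzero scalar coefficient (guaranteed by genericity); then $\det$ of such a polynomial in $\xi$ is a nonzero polynomial in the eigenvalues of $\xi$ (e.g. evaluate at $\xi=\mathrm{diag}(\lambda_1,\dots,\lambda_d)$ with generic $\lambda_i$, giving a product $\prod_i(\text{nonzero one-variable polynomial in }\lambda_i)$). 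This shows $\det(\psi^{(k)})\not\equiv 0$. Then the bad locus is the union over the finitely many $(i,j,k)$ and over $k$ of the hypersurfaces $\{\psi^{(k)}_{i,j}=0\}$ and $\{\det\psi^{(k)}=0\}$, each a proper closed subset; its complement is open dense, as claimed.

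The main obstacle I anticipate is the entry-wise nonvanishing step: one must choose the test matrices $A,B$ (and the scalars) so that, after grouping the words of a surviving bidegree $(p,q)$, no cancellation occurs in \emph{any} of the $d^2$ entries simultaneously—cancellation among different words $A^{m_1}B^{m_2}\cdots$ with the same $(p,q)$ is exactly what the invariants $\tilde{K}_{p,q,r,s}$ are designed to forbid, and marshalling them correctly (using $|r-s|\le1$ to organize the words by their $\zeta/\xi$-length) is the delicate combinatorial core. Everything after that—Zariski-closedness of zero sets, finite intersections of dense opens being dense—is routine.
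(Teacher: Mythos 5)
Your global architecture is the same as the paper's: reduce everything to showing that each entry $\psi^{(k)}_{i,j}$ and each $\det(\psi^{(k)})$ is a not-identically-zero polynomial in the $2d^2$ coordinates, observe that the bad locus is then a finite union of proper hypersurfaces, and conclude that its complement is open and dense. That reduction is correct and routine, as you say. The gap is in the two non-vanishing steps, which are exactly where the content of the theorem lives. For the entries, your specialization $\zeta=xA$, $\xi=yB$ only sees the total bidegree $(p,q)$ of each word: the coefficient of $x^py^q$ is $\sum_w \kappa_w\, w(A,B)$ over all words $w$ of that bidegree, and the hypothesis $K_{pq}=\sum_w\kappa_w\neq 0$ controls this sum only if you force all words of a given bidegree to collapse to $A^pB^q$, i.e.\ if $A$ and $B$ commute. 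That commuting version does work whenever some $K_{pq}\neq0$ with $p+q\geq 1$ (take $A=B$ with all powers entrywise positive), but it fails when the only nonzero $K$ is $K_{00}$: the $(0,0)$-component is a multiple of $I_d$, whose off-diagonal entries vanish identically. Such $\psi$ exist and are generic, e.g.\ $\psi=I+\zeta\xi-\xi\zeta$, whose off-diagonal entries are genuinely nonzero polynomials even though every $K_{pq}$ with $(p,q)\neq(0,0)$ vanishes. In that situation the second genericity invariant $\tilde{K}_{p,q,r,s}$ is the only available datum, and your specialization is structurally blind to it: pure rescaling cannot detect the block counts $r,s$ or the exponents $m_i$ that define $\tilde{K}$. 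Your remark that the $\tilde{K}$'s are ``designed to forbid cancellation'' among words of the same bidegree is an assertion, not an argument; no choice of fixed $A,B$ in $\zeta=xA$, $\xi=yB$ extracts them. The paper instead deforms along a nilpotent direction with two extra scalar parameters ($\zeta=xI+zE$, $\xi=yI+tE$ on a $2\times2$ block), which is what makes the $\tilde{K}$'s appear in an off-diagonal entry, and then propagates non-vanishing from the entries $(1,1)$ and $(1,2)$ to all entries via the equivariance $\psi(g^{-1}\zeta g,g^{-1}\xi g)=g^{-1}\psi(\zeta,\xi)g$ with $g$ a permutation matrix. Some mechanism of this kind (extra deformation parameters beyond scalings, plus the conjugation trick) is the missing core of your entry-wise step.

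There is also a smaller but real hole in your determinant step. Specializing $\zeta=I_d$ collapses the data to the column sums $K'_q=\sum_p K_{pq}$, and genericity does not guarantee any $K'_q\neq0$: for $\psi=(\zeta-I)(\xi-I)=\zeta\xi-\zeta-\xi+I$, which is generic ($K_{11}=1$), both $\psi(I_d,\xi)$ and $\psi(\zeta,I_d)$ are identically zero, so neither of your one-sided specializations produces a nonzero polynomial, even though $\det\psi=\det(\zeta-I)\det(\xi-I)$ is plainly nonzero. The fix is the paper's two-parameter scalar specialization $(\zeta,\xi)=(\rho I_d,\sigma I_d)$, which gives $\det(\psi(\rho I_d,\sigma I_d))=\bigl(\sum_{p,q}K_{pq}\rho^p\sigma^q\bigr)^d$; since the monomials $\rho^p\sigma^q$ are linearly independent, a single nonzero $K_{pq}$ suffices. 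With that substitution the determinant half of your argument goes through; the entry-wise half still needs the additional machinery described above.
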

\begin{proof}
In order to prove this result, we first show that if $\psi(\zeta,\xi)$ is the generic matricial polynomial 
function defined by (\ref{rdl}), then all its entries $\psi_{i,j}(\zeta,\xi)$ are nonzero polynomials w.r.t. their $2d^2$ indeterminates.
The proof goes as follows. 
Let us specialize $\psi$ at matrices of the shape
\begin{center}
$\zeta=\begin{pmatrix}x & z & 0_{1,d-2}\\
0 & x & 0_{1,d-2}\\
0_{d-2,1} & 0_{d-2,1} & I_{d-2}
\end{pmatrix}$ and $\xi=\begin{pmatrix}y & t & 0_{1,d-2}\\
0 & y & 0_{1,d-2}\\
0_{d-2,1} & 0_{d-2,1} & I_{d-2}
\end{pmatrix}$
\end{center}
for convenient $x,y,z,t\in\mathbb{C}$. 
We easily get 
\begin{equation}
\psi(\zeta,\xi)=\begin{pmatrix}\psi(x,y) & \tilde{\psi}(x,y,z,t) & 0_{1,d-2}\\
0 & \psi(x,y) & 0_{1,d-2}\\
0_{d-2,1} & 0_{d-2,1} & \psi(1,1)I_{d-2}\end{pmatrix}
\label{equation14}
\end{equation}
where $\psi$ is the function (\ref{rdl}) evaluated at complex numbers and 
is equal to $\sum K_{p,q}x^py^q$ while
 $\tilde{\psi}$ is the polynomial $\sum \tilde{K}_{p,q,r,s}x^{p-r}y^{q-s}z^rt^s$. 
Due to the hypotheses, the sets of couples $(x,y)$ or quadruplets $(x,y,z,t)$ such that $\psi(x,y)\neq 0$ or 
$\tilde{\psi}(x,y,z,t)\neq 0$ 
respectively, are open everywhere dense subsets in $\mathbb{C}^2$ and in $\mathbb{C}^4$ respectively. 
Now, in order to deal with the zero-like entries of 
(\ref{equation14}), 
we use the following property of $\psi$. 
We have
\begin{center}
$\psi(g^{-1}\zeta g,g^{-1}\xi g)=g^{-1}\psi(\zeta,\xi)g$,
\end{center} 
where $g$ is any invertible matrix in $\mathbb{C}^{d\times d}$. 
Especially, if we choose $g$ as a permutation matrix $\mu_\sigma$, we obtain 
\begin{center}
$\psi_{ij}(\mu_\sigma^{-1}\zeta \mu_\sigma,\mu_\sigma^{-1}\xi \mu_\sigma)=\psi_{\sigma(i),\sigma(j)}(\zeta,\xi)$.
\end{center} 
The symmetric group $\scriptstyle\mathfrak{S}_d$ acts transitively on the set of couples $(i,i)$ with $1\leq i\leq d$
 and transitively on the set of couples $(i,j)$ with $1\leq i,j\leq d$ and $i\neq j$. 
Thus, since $\psi_{1,1}$ and $\psi_{1,2}$ are nonzero polynomials, we may claim by using suitable specializations 
of $(\zeta,\xi)$ that every other entry $\psi_{i,j}$ of $\psi(\zeta,\xi)$ gives rise to a nonzero polynomial. 

Let us prove now that if $\psi$ is generic, then $\det(\psi(\zeta_{11},\ldots,\xi_{dd}))$ is a nonzero polynomial. 
Indeed, if we particularize $\zeta$ and $\xi$ to be two multiples of the identity matrix, i.e. 
$(\zeta,\xi)=(\rho I_d,\sigma I_d)$ with $\rho$ and $\sigma$ in $\mathbb{C}$, we get the polynomial
\begin{equation*}
\det(\psi(\rho I_d,\sigma I_d))=\left(\sum_{p,q}K_{p,q}\rho^{p}\sigma^{q}\right)^d=\psi(\rho,\sigma)^d
\end{equation*}
which is nonzero in $\mathbb{C}[\rho,\sigma]$ since there exists at leat one coefficient $K_{pq}\neq 0$.

Collecting all these informations, we may work with open dense sets in $\mathbb{C}^{2d^2}$ instead of dealing with nonzero polynomials. 
Let us consider for all $k\in\{1,\ldots,l\}$ and $i,j\in\{1,\ldots,d\}$ the closed algebraic varieties of codimension 1 
\begin{equation*}
\mathcal{F}_{ij}^{(k)}=\{(\zeta,\xi)\in(\mathbb{C}^{d\times d})^2/\psi^{(k)}_{ij}(\zeta_{11},\ldots,\xi_{dd})=0\}
\end{equation*}
and
\begin{center}
$\mathcal{F}_{\det}^{(k)}=\{(\zeta,\xi)\in(\mathbb{C}^{d\times d})^2/\det(\psi^{(k)}(\zeta_{11},\ldots,\xi_{dd}))=0\}.$
\end{center}
Since each hypersurface is meager, the union $\displaystyle (\cup_{ijk}\mathcal{F}^{(k)}_{ij})
\bigcup(\cup_{k}\mathcal{F}^{(k)}_{\det})$ is a closed meager subset of $(\mathbb{C}^{d\times d})^2$. 
Thus, its complementary is an everywhere open dense subset of $(\mathbb{C}^{d\times d})^2$. 
This ends the proof of the result. 
\end{proof}
\begin{remark}\rm
As an example, for all integer $m$, the set of couples $(\beta,\gamma)\in(\mathbb{C}^{d\times d})^2$ such that 
\begin{equation}
\det(\beta)\neq 0,~\det(\gamma)\neq 0,~\beta\gamma\neq\gamma\beta\mbox{ and }\det(\delta_n)\neq 0,~\forall n\in\{0,\ldots,m\},
\end{equation}
is an open dense subset of $(\mathbb{C}^{d\times d})^2$. 
In the very special case when the matrices $\beta,\gamma$ are multiples of $I_d$, i.e. 
$\beta=\tilde{\beta}I_d$ and $\gamma=\tilde{\gamma}I_d$ with $\tilde{\beta}$ and $\tilde{\gamma}$ in $\mathbb{C}$, 
we easily find help to (\ref{recform}) that 
\begin{center}
$\delta_n=\frac{1}{\sqrt{\tilde{\beta}^2+4\tilde{\gamma}}}\left(\left(\frac{\tilde{\beta}+\sqrt{\tilde{\beta}^2+
4\tilde{\gamma}}}{2}\right)^n-\left(\frac{\tilde{\beta}-\sqrt{\tilde{\beta}^2+4\tilde{\gamma}}}{2}\right)^n\right)I_d$.
\end{center}
The set of couples $(\tilde{\beta},\tilde{\gamma})$ such that $\frac{\tilde{\beta}+\sqrt{\tilde{\beta}^2+4\tilde{\gamma}}}
{\tilde{\beta}-\sqrt{\tilde{\beta}^2+4\tilde{\gamma}}}$ is not of the shape $\exp\left(\frac{2ik\pi}{n}\right)$ 
is open dense in $\mathbb{C}^2$ and, for those couples, we have $\det(\delta_{n})\neq 0$. 
\label{remark4.1}
\end{remark}

\begin{remark}\rm
The set of generic polynomials is an open dense subset in the vector space of matricial polynomial functions of given degree.
\label{remark4.2}
\end{remark}

\begin{remark}\rm
Let us mention how to proceed if we deal with non-generic polynomials. We may use in this case the 
so-called defect Theorem of P.M. Cohn, see \cite[Theorem 9.6.1, p. 283]{Lot}. If $\mathcal{K}$ is a commutative field and $\Omega$ 
is an alphabet of non commuting variables, we may denote by $\mathcal{K}\langle\langle \Omega\rangle\rangle$ the algebra of non 
commutative formal series in these variables. Then the defect theorem states that \textit{if $\varphi_1,\varphi_2$ are elements of 
$\mathcal{K}\langle\langle\Omega\rangle\rangle$ without constant term, satisfying a non-trivial relation $\Phi(\varphi_1,\varphi_2)=0$ 
for some non commutative series $\Phi$ in two variables then $\varphi_1,\varphi_2$ commute}. This theorem allows to exhibit 
at least one nonzero polynomial entry to $\psi(\zeta,\xi)$, but the conclusion is not as so accurate than in Theorem \ref{ilpmm}.
\label{remark4.3}
\end{remark}

\section{Solving explicitly the functional equation when $N=1$}

Let us show now how to solve the functional equation (\ref{eq1}) when $N=1$ by using the preceding results.
 We restrict ourselves to the quite opposite cases $\alpha$ invertible or $\alpha=0$. 
Let us define $M\geq 1$ as the integer part of $\frac{t_f-t_0}{\varepsilon}$.
\begin{theorem}
If $N=1$, for all quadruplets $(c_{-1},c_0,c_1,\alpha)$ in an open dense subset of $(\mathbb{C}^{d\times d})^{4}$, the equation
\begin{center}
$\displaystyle \alpha \mathbf{u}+\mathcal{C}\star(\chi \mathbf{u})=\mathbf{f}$
\end{center} 
admits one and only one solution.
\end{theorem}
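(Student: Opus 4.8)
The strategy is to reduce the functional equation, via the explicit piecewise formula \eqref{CMA} for $\Box\mathbf{u}=\mathcal{C}\star(\chi\mathbf{u})$ with $N=1$ (displayed as \eqref{Nequal1}), to a finite linear recurrence for the vector sequence $\mathbf{w}_n=\mathbf{u}(t_0+n\varepsilon)$ — more precisely for the values of $\mathbf{u}$ sampled on the grid inside $[t_0,t_f]$ together with the finitely many extra knots at $t_0-\varepsilon$ and $t_f+\varepsilon$ forced by the support $[t_0-\varepsilon,t_f+\varepsilon]$ of $\Box\mathbf{u}$. Since $\alpha$ is assumed invertible, on each subinterval \eqref{Nequal1} lets us solve for one sampled value in terms of its neighbours: the generic middle relation $c_1\mathbf{u}(t+\varepsilon)+c_0\mathbf{u}(t)+c_{-1}\mathbf{u}(t-\varepsilon)=\mathbf{f}(t)-\alpha\mathbf{u}(t)$ becomes, after moving the $\alpha$ term, a three-term matricial recurrence of exactly the form \eqref{rec} with $\beta=-c_1^{-1}(c_0+\alpha)$, $\gamma=-c_1^{-1}c_{-1}$ (we first need $c_1$ invertible, which holds on an open dense set), so Lemma \ref{lem4-sagv} applies and expresses every interior sampled value as $\delta_n\mathbf{w}_1+\delta_{n-1}\gamma\mathbf{w}_0+\sum\delta_{n-k}\mathbf{g}_k$ in terms of the two ``initial'' unknowns $\mathbf{w}_0,\mathbf{w}_1$ and the data.

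Next I would feed this general solution of the bulk recurrence into the boundary rows of \eqref{Nequal1} — the four rows valid near $t_0-\varepsilon,t_0,t_f,t_f+\varepsilon$ where the sum is truncated. Because $\Box\mathbf{u}$ must vanish outside $[t_0-\varepsilon,t_f+\varepsilon]$ and the equation $\alpha\mathbf{u}=\mathbf{f}$ must hold there, the values of $\mathbf{u}$ off the relevant window are determined ($\mathbf{u}=\alpha^{-1}\mathbf{f}$ outside, in particular $\mathbf{u}(t_0-\varepsilon)$ etc. are pinned), and substituting everything back produces a finite \emph{square} linear system — the \emph{compatibility equations} — whose unknowns are $\mathbf{w}_0$ and $\mathbf{w}_1$ (equivalently, a vector in $\mathbb{C}^{2d}$) and whose coefficient matrix is built out of $c_{-1},c_0,c_1,\alpha$ and the matricial polynomials $\delta_n$ in $\beta,\gamma$. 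One then checks that this system has the same number of scalar equations as unknowns (the bookkeeping of rows in \eqref{Nequal1} gives $4N+2=6$ boundary relations, of which the genuinely constraining ones match $2d$ after accounting for the already-forced values), so that well-posedness is equivalent to the nonvanishing of one fixed determinant.

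The final step is to invoke the genericity machinery of Section 4. The coefficient matrix of the compatibility system has entries that are matricial polynomial functions of $(\beta,\gamma)$ — hence, pulling back along $(c_{-1},c_0,c_1,\alpha)\mapsto(\beta,\gamma)=(-c_1^{-1}(c_0+\alpha),-c_1^{-1}c_{-1})$, rational functions of the $4d^2$ entries of the original quadruplet. By Remark \ref{remark4.1} (or directly Theorem \ref{ilpmm}), the $\delta_n$ are specializations of generic polynomials, so the relevant determinant is a nonzero rational function on $(\mathbb{C}^{d\times d})^4$; its nonvanishing locus, intersected with the open dense set $\{\det(c_1)\neq 0,\ \det(\alpha)\neq 0\}$, is again open and dense. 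On that set the compatibility system is Cramer, $\mathbf{w}_0,\mathbf{w}_1$ are uniquely determined, Lemma \ref{lem4-sagv} then reconstructs $\mathbf{u}$ uniquely on the grid, and assertion 3 of the Theorem in Section 2 (conditional injectivity, applicable since $c_1$ is invertible) shows $\mathbf{u}$ is uniquely determined everywhere, giving existence and uniqueness. The main obstacle I anticipate is purely combinatorial: carefully writing the boundary rows of \eqref{Nequal1} so that the resulting system is genuinely square and verifying that the determinant is not \emph{identically} zero — i.e. exhibiting one explicit quadruplet (say with $\beta,\gamma$ scalar multiples of $I_d$, using the closed form for $\delta_n$ in Remark \ref{remark4.1}) for which the compatibility system is invertible, which is what licenses the transition from ``nonzero polynomial'' to ``open dense set''.
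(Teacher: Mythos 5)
Your overall route is the paper's: assume $\det(c_1)\neq 0$ by density, turn the middle rows of \eqref{Nequal1} into the two-step matricial recurrence \eqref{rec} with $\beta=-c_1^{-1}(\alpha+c_0)$, $\gamma=-c_1^{-1}c_{-1}$, apply Lemma \ref{lem4-sagv}, and close the boundary rows into a square compatibility system whose determinant is handled by the genericity results of Section 4. But there is one concrete error in your boundary bookkeeping, and it sits exactly at the crux of the proof: you assert that the values of $\mathbf{u}$ at the extra knots, ``in particular $\mathbf{u}(t_0-\varepsilon)$'', are pinned by $\alpha\mathbf{u}=\mathbf{f}$. They are not. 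That identity holds only for $t<t_0-\varepsilon$ and $t>t_f+\varepsilon$; on $[t_0-\varepsilon,t_0[$ the equation reads $\alpha\mathbf{u}(t)+c_1\mathbf{u}(t+\varepsilon)=\mathbf{f}(t)$ (row \eqref{bound1}), which leaves the restriction of $\mathbf{u}$ to $[t_0-\varepsilon,t_0[$ --- the paper's auxiliary function $\varphi$ --- completely free at that stage. If you literally pin $\varphi=\alpha^{-1}\mathbf{f}$ there, then \eqref{bound1} forces $\mathbf{w}_0$, \eqref{bound2} forces $\mathbf{w}_1$, the recurrence forces everything up to $t_f$, and the row \eqref{bound3} on $]t_f-\varepsilon,t_f]$ becomes a residual constraint on $\mathbf{f}$ that is generically unsatisfiable: with unknowns $(\mathbf{w}_0,\mathbf{w}_1)\in\mathbb{C}^{2d}$ you face the $3d$ scalar equations \eqref{bound1}, \eqref{bound2}, \eqref{bound3}, so the system is $3d\times 2d$ rather than square and existence fails.

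The correct count is: one free unknown $\varphi(\tau-\varepsilon)\in\mathbb{C}^d$, with $\mathbf{w}_0$ and $\mathbf{w}_1$ expressed in terms of it and of $\mathbf{f}$ via \eqref{solbound1}--\eqref{solbound2}, and exactly one residual constraint, namely \eqref{bound3}; equation \eqref{bound4} is not a constraint but determines $\mathbf{u}$ on $]t_f,t_f+\varepsilon]$ through $\alpha^{-1}$ once everything else is known. The paper then computes the $d\times d$ coefficient of $\varphi$ in \eqref{bound3} in closed form, $\theta=c_1\delta_{\ell+2}c_1^{-1}\alpha$, by eliminating $\alpha+c_0$ and $c_{-1}$ via \eqref{betagamma} and applying the recurrence \eqref{recform} three times; genericity of $\det(\delta_{\ell+2})$ (Theorem \ref{ilpmm} and Remark \ref{remark4.1}) together with $\det(\alpha)\neq 0$ then makes $\theta$ invertible, which is the Cramer condition you were after. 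Your idea of certifying nonvanishing by specializing $\beta,\gamma$ to scalar multiples of $I_d$ is sound and consistent with Remark \ref{remark4.1}, but it only becomes usable once the system has been reduced to this single $d\times d$ block. A smaller second point: uniqueness does not follow from assertion 3 of the theorem of Section 2, since injectivity of $\Box$ says nothing about the kernel of the operator $\mathbf{u}\mapsto\alpha\mathbf{u}+\Box\mathbf{u}$; it follows instead from the fact that each step of the construction ($\mathbf{u}=\alpha^{-1}\mathbf{f}$ off the window, $\varphi$ from the Cramer system, then \eqref{solsafe} and \eqref{bound4}) determines the corresponding restriction uniquely.
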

\begin{proof}
We may assume by density that $c_1$ is invertible. If $M=1$, the result follows easily from the inspection of (\ref{Nequal1}).\\
The main part of the proof consists in solving (\ref{Nequal1}) and is valid for all $\alpha$. 
Outside $[t_0-\varepsilon,t_f+\varepsilon]$, one has $\mathbf{f}(t)=\alpha\mathbf{u}(t)$ which allows to 
determine $\mathbf{u}$ if $\alpha$ is invertible. Next, we focus on the interval $[t_0+\varepsilon,t_f-\varepsilon]$. 
By using (\ref{eq2}), we get the recurrence equation:
\begin{equation}
\mathbf{u}(\tau+(n+1)\varepsilon)=
c_1^{-1}(-(\alpha+c_0)\mathbf{u}(\tau+n\varepsilon)-c_{-1}\mathbf{u}(\tau+(n-1)\varepsilon)+\mathbf{f}(\tau+n\varepsilon)), 
\label{bound}
\end{equation}
$\forall \tau\in[t_0,t_0+\varepsilon[$ and $n\geq 1$. So we see that (\ref{bound}) is a recurrence of the shape (\ref{rec}). 
Using Lemma \ref{lem4-sagv} with 
\begin{equation}
\beta=-c_1^{-1}(\alpha+c_0),~~\gamma=-c_1^{-1}c_{-1},
\label{betagamma}
\end{equation}
and $\mathbf{w}_n=\mathbf{u}(\tau+n\varepsilon)$, $\mathbf{g}_n=c_1^{-1}\mathbf{f}(\tau+n\varepsilon)$ for $\tau$ fixed, we get 
\begin{equation}
\mathbf{u}(\tau+n\varepsilon)=\delta_{n}\mathbf{u}(\tau+\varepsilon)
+\delta_{n-1}\gamma\mathbf{u}(\tau)+\sum_{k=1}^{n-1}\delta_{n-k}c_1^{-1}\mathbf{f}(\tau+k\varepsilon),
\label{solsafe}
\end{equation}
for all instant $\tau+n\varepsilon\leq t_f$. Since $\tau\in[t_0,t_0+\varepsilon[$, we have in any case $n\leq M-1$. 
More explicitly, we shall use (\ref{solsafe}) in the following ranges of indices:
\begin{eqnarray}
\tau\in[t_0,t_f-M\varepsilon[\Rightarrow\max\{n/\tau+n\varepsilon\leq t_f\}=M\label{range1},\\
\tau\in[t_f-M\varepsilon,t_0+\varepsilon[\Rightarrow\max\{n/\tau+n\varepsilon\leq t_f\}=M-1\label{range2}.
\end{eqnarray}
We find it convenient to denote by $\ell=\ell(\tau)$ the maximal value of $n$ for which we may use (\ref{solsafe}), 
given by (\ref{range1}) or (\ref{range2}). Although the formula (\ref{solsafe}) is true for $n=1$, this result is tautological
 in this case. For $n\geq 2$, (\ref{solsafe}) expresses the solution to the functional equation (\ref{eq1}) in the interval 
$[t_0+2\varepsilon,t_f]$, as a linear combination of the restrictions $\mathbf{u}(\tau)$ and $\mathbf{u}(\tau+\varepsilon)$ of
 $\mathbf{u}$ to the two intervals 
$[t_0,t_0+\varepsilon[$ and $[t_0+\varepsilon,t_0+2\varepsilon[$. 
The coefficients of this linear relationship are independent on time and constitute the sequence of matrices 
$(\delta_n)$ which depend only on $\beta$ and $\gamma$.

Now, let us determine the two additional unknown functions $\mathbf{u}(\tau)$ and $\mathbf{u}(\tau+\varepsilon)$ 
by solving (\ref{eq2}) near the boundaries $t_0$ and 
$t_f$. In order to do this, let us write explicitly the four remaining equations:
\begin{eqnarray}
\mbox{if }t\in[t_0-\varepsilon,t_0[, & \mathbf{f}(t)=\alpha\mathbf{u}(t)+c_1\mathbf{u}(t+\varepsilon),\label{bound1}\\
\mbox{if }t\in[t_0,t_0+\varepsilon[, & \mathbf{f}(t)=(\alpha+c_0)\mathbf{u}(t)+c_1\mathbf{u}(t+\varepsilon),\label{bound2}\\
\mbox{if }t\in]t_f-\varepsilon,t_f], & \mathbf{f}(t)=(\alpha+c_0)\mathbf{u}(t)+c_{-1}\mathbf{u}(t-\varepsilon),\label{bound3}\\
\mbox{if }t\in]t_f,t_f+\varepsilon], & \mathbf{f}(t)=\alpha\mathbf{u}(t)+c_{-1}\mathbf{u}(t-\varepsilon)\label{bound4}.
\end{eqnarray}
We note that the restriction of $\mathbf{u}$ to $[t_0-\varepsilon,t_0[$ is undetermined. Let us call this function $\varphi$. 
Next we solve (\ref{bound1}) and (\ref{bound2}) as follows, by adapting slightly the range of the time variable. 
If $t\in[t_0,t_0+\varepsilon[$, (\ref{bound1}) provides
\begin{equation}
\mathbf{u}(t)=c_1^{-1}\mathbf{f}(t-\varepsilon)-c_1^{-1}\alpha\varphi(t-\varepsilon).
\label{solbound1}
\end{equation}
If $t\in[t_0+\varepsilon,t_0+2\varepsilon[$, (\ref{bound2}) and (\ref{solbound1}) give 
\begin{equation}
\mathbf{u}(t)=c_1^{-1}\mathbf{f}(t-\varepsilon)-c_1^{-1}
(\alpha+c_0)c_1^{-1}\mathbf{f}(t-2\varepsilon)+c_1^{-1}(\alpha+c_0)c_1^{-1}\alpha\varphi(t-2\varepsilon).
\label{solbound2}
\end{equation}
The formulas (\ref{solbound1}) and (\ref{solbound2}) give the two additional restrictions of $\mathbf{u}$ needed in the expansion 
(\ref{solsafe}). We note that these restrictions, at the time being, depend on the auxiliary restriction $\varphi$ of $\mathbf{u}$. 

Now it remains to solve (\ref{bound3}) and (\ref{bound4}). First, the two functions occuring in the r.h.s. of the equation 
(\ref{bound3}) are already known and may be expressed through (\ref{solsafe}). The equation (\ref{bound3}) may be thought as a 
constraint either on $\varphi$ or on the l.h.s. $\mathbf{f}$ of (\ref{eq1}). We may observe that $\varphi(\tau-\varepsilon)$ occurs 
two times in 
$\mathbf{u}(t)$ and two times in $\mathbf{u}(t-\varepsilon)$. So, the coefficient we are looking for is equal to
\begin{center}
$\theta=((\alpha+c_0)\delta_{\ell}+c_{-1}\delta_{\ell-1})\mbox{coeff of }\varphi\mbox{ in }\mathbf{u}(\tau+\varepsilon)]$\\
$+((\alpha+c_0)\delta_{\ell-1}+c_{-1}\delta_{\ell-2})\gamma[\mbox{coeff of }\varphi\mbox{ in }\mathbf{u}(\tau)]$,
\end{center}
where $\ell=M$ or $\ell=M-1$, depending on the location of $\tau$ w.r.t. $t_f-M\varepsilon$.
Direct inspection of these coefficients from (\ref{solbound1}) and (\ref{solbound2}) yields the explicit expression 
of the coefficient of $\varphi$ in (\ref{bound3}):
\begin{eqnarray}
\theta=((\alpha+c_0)\delta_{\ell}+c_{-1}\delta_{\ell-1})(c_1^{-1}(\alpha+c_0)c_1^{-1}\alpha)\nonumber\\
+((\alpha+c_0)\delta_{\ell-1}+c_{-1}\delta_{\ell-2})\gamma(-c_1^{-1}\alpha)\hskip 1cm
\end{eqnarray}
\begin{center}
$ = [((\alpha+c_0)\delta_{\ell}+c_{-1}\delta_{\ell-1})c_1^{-1}(\alpha+c_0)-
((\alpha+c_0)\delta_{\ell-1}+c_{-1}\delta_{\ell-2})\gamma]c_1^{-1}\alpha$.
\end{center}
Now, in order to simplify the matrix $\theta$, we use the formulas (\ref{betagamma}) to eliminate $\alpha+c_0$ and $c_{-1}$ and we get
\begin{center}
$\theta=c_1((\beta\delta_{\ell}+\gamma\delta_{\ell-1})\beta+(\beta\delta_{\ell-1}+\gamma\delta_{\ell-2})\gamma)c_1^{-1}\alpha$.
\end{center}
By using three times the recurrence (\ref{recform}), we obtain 
\begin{center}
$\theta=c_1\delta_{\ell+2}c_1^{-1}\alpha$.
\end{center}
At the time being, no assumption has been made about $\alpha$. From now on, we consider the case when $\alpha$ is invertible and 
chosen in such a way that $\det(\delta_{\ell+2})\neq 0$ (see (\ref{betagamma}) and Remark \ref{remark4.1}). 
Therefore, $\det(\theta)\neq 0$. Hence, (\ref{bound3}) may be solved w.r.t. 
$\varphi(\tau-\varepsilon)$. As a by-product of the previous results, $\varphi(\tau-\varepsilon)$ is a linear combination 
of the values of $\mathbf{f}$ on the set $\{\tau+k\varepsilon\,|k|\leq N\}$. It remains to solve equation (\ref{bound4}) on 
$]t_f,t_f+\varepsilon]$.  We get simply $\mathbf{u}(t)=\alpha^{-1}(\mathbf{f}(t)-c_{-1}\mathbf{u}(t-\varepsilon))$ for all
 $t\in]t_f,t_f+\varepsilon]$, where the r.h.s. has already been determined. Then we have proved that, for a generic quadruplet 
$(c_{-1},c_0,c_1,\alpha)$ and for all 
$\mathbf{f}:\mathbb{R}\rightarrow\mathbb{C}^d$, there exists one and only one solution to the functional equation (\ref{eq1}). 
\end{proof}
Next, let us consider the case when $\alpha$ is zero, which stands for the characterization of $\mbox{Im}(\Box)$. 
\begin{theorem}
For all triplets $(c_{-1},c_0,c_1)$ in an open dense subset of $(\mathbb{C}^{d\times d})^{3}$ and for all 
$\mathbf{g}:[t_0,t_f]\rightarrow\mathbb{C}^d$, there exists one and only one extension $\mathbf{f}$ of $\mathbf{g}$ to 
$\mathbb{R}$ such that $Supp(\mathbf{f})\subset[t_0-\varepsilon,t_f+\varepsilon]$ and the functional equation 
$\mathbf{f}=\Box\mathbf{u}$ admits one and only one solution in $[t_0,t_f]$.
\end{theorem}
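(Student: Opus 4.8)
The plan is to run the argument of the preceding theorem with $\alpha$ set equal to $0$, paying close attention to which slices of $\mathbf u$ on $[t_0,t_f]$ are free and which are determined. By density we may assume $c_1$ invertible; the case $M=1$ being immediate from (\ref{Nequal1}) exactly as before, suppose $M\geq 2$. With $\alpha=0$ and $N=1$, formula (\ref{CMA}) (equivalently (\ref{Nequal1}) with the $\alpha\mathbf u$ terms deleted) shows that every function $\Box\mathbf u$ is automatically supported in $[t_0-\varepsilon,t_f+\varepsilon]$, so the support requirement on $\mathbf f$ costs nothing; and the third assertion of the opening theorem of Section~2, valid because $c_1$ is invertible, gives $\Box\mathbf u=\Box\mathbf v\Leftrightarrow\mathbf u=\mathbf v$ on $[t_0,t_f]$, so a solution of $\mathbf f=\Box\mathbf u$ is unique on $[t_0,t_f]$. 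Hence it suffices, for an arbitrary $\mathbf g:[t_0,t_f]\to\mathbb C^d$, to construct a function $\mathbf u$ on $[t_0,t_f]$ with $\Box\mathbf u=\mathbf g$ there and to see it is unique: the extension $\mathbf f$ is then forced, since the two boundary rows of (\ref{CMA}) read $\mathbf f=c_1\mathbf u(\cdot+\varepsilon)$ on $[t_0-\varepsilon,t_0)$ and $\mathbf f=c_{-1}\mathbf u(\cdot-\varepsilon)$ on $(t_f,t_f+\varepsilon]$, both functions of $\mathbf u|_{[t_0,t_f]}$ alone, and $\mathbf f=0$ elsewhere.

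On $[t_0+\varepsilon,t_f-\varepsilon]$, $\Box\mathbf u=\mathbf g$ is precisely the recurrence (\ref{bound}) with $\alpha=0$, hence of the type (\ref{rec}) with $\beta=-c_1^{-1}c_0$ and $\gamma=-c_1^{-1}c_{-1}$, and Lemma~\ref{lem4-sagv} gives the analogue of (\ref{solsafe}),
\begin{equation*}
\mathbf u(\tau+n\varepsilon)=\delta_n\,\mathbf u(\tau+\varepsilon)+\delta_{n-1}\gamma\,\mathbf u(\tau)+\sum_{k=1}^{n-1}\delta_{n-k}\,c_1^{-1}\mathbf g(\tau+k\varepsilon),
\end{equation*}
for $\tau\in[t_0,t_0+\varepsilon)$ and $\tau+n\varepsilon\leq t_f$. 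The row $t\in[t_0,t_0+\varepsilon)$ of (\ref{CMA}) reads $\mathbf g(\tau)=c_0\mathbf u(\tau)+c_1\mathbf u(\tau+\varepsilon)$, whence $\mathbf u(\tau+\varepsilon)=c_1^{-1}\mathbf g(\tau)+\beta\,\mathbf u(\tau)$; so the only genuinely free datum is $\varphi:=\mathbf u|_{[t_0,t_0+\varepsilon)}$, and the displayed formula turns every value of $\mathbf u$ on $[t_0,t_f]$ into an affine function of $\varphi$ and of the $\mathbf g(\tau+k\varepsilon)$ with time-independent coefficients. The last row $t\in(t_f-\varepsilon,t_f]$ of (\ref{CMA}), namely $\mathbf g(t)=c_0\mathbf u(t)+c_{-1}\mathbf u(t-\varepsilon)$, is thus a constraint on $\varphi$: writing $t=\tau+\ell\varepsilon$ with $\ell=\ell(\tau)\in\{M-1,M\}$ as in the previous proof, the coefficient of $\varphi(\tau)$ equals $c_0(\delta_\ell\beta+\delta_{\ell-1}\gamma)+c_{-1}(\delta_{\ell-1}\beta+\delta_{\ell-2}\gamma)$, which, using $\delta_\ell\beta+\delta_{\ell-1}\gamma=\delta_{\ell+1}$ and $\delta_{\ell-1}\beta+\delta_{\ell-2}\gamma=\delta_\ell$ (second form of (\ref{recform})), then $c_0=-c_1\beta$, $c_{-1}=-c_1\gamma$ and $\beta\delta_{\ell+1}+\gamma\delta_\ell=\delta_{\ell+2}$ (first form), collapses to $-c_1\delta_{\ell+2}$ — this is the computation of $\theta$ in the previous proof with the factor $c_1^{-1}\alpha$ removed.

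To conclude, note that $\delta_{M+1}$ and $\delta_{M+2}$ depend only on $(\beta,\gamma)$; by Remark~\ref{remark4.1} (itself a consequence of Theorem~\ref{ilpmm}) the set of $(\beta,\gamma)$ with $\det\delta_{M+1}\neq0$ and $\det\delta_{M+2}\neq0$ is open dense, and $(c_{-1},c_0,c_1)\mapsto(\beta,\gamma)=(-c_1^{-1}c_0,-c_1^{-1}c_{-1})$ is a continuous open map on the open dense set $\{\det c_1\neq0\}\subset(\mathbb C^{d\times d})^3$, so the triplets $(c_{-1},c_0,c_1)$ for which $c_1$ is invertible and $\det\delta_{\ell+2}\neq0$ for $\ell\in\{M-1,M\}$ form an open dense subset of $(\mathbb C^{d\times d})^3$. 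For such a triplet, $-c_1\delta_{\ell+2}$ is invertible, so the above constraint is a Cramer system determining $\varphi(\tau)$ uniquely at each $\tau$ as a time-independent linear combination of values of $\mathbf g$; back-substitution then yields a unique $\mathbf u$ on $[t_0,t_f]$ with $\Box\mathbf u=\mathbf g$ there. Feeding this $\mathbf u$ into the boundary rows of (\ref{CMA}) produces an extension $\mathbf f$ of $\mathbf g$ with $\mathrm{Supp}(\mathbf f)\subset[t_0-\varepsilon,t_f+\varepsilon]$, and any extension with that support for which $\mathbf f=\Box\mathbf u$ is solvable must coincide with it; for that $\mathbf f$ the equation has exactly one solution on $[t_0,t_f]$, which is the assertion.

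The main obstacle is the algebraic collapse of the coefficient of the free slice $\varphi(\tau)$ in the terminal compatibility equation to $-c_1\delta_{\ell+2}$, together with the genericity of its invertibility; but both are already contained in the previous theorem (the $\theta$-computation, Lemma~\ref{lem2-sagv}, Remark~\ref{remark4.1}), specialised at $\alpha=0$. The only genuinely new points are the bookkeeping — here a single free slice $\varphi$ and one vector constraint per $\tau$ — and the observation that, with $\alpha=0$, the support of $\mathbf f$ and the uniqueness of the extension come for free from the structure of $\Box\mathbf u$ rather than being extra conditions to enforce.
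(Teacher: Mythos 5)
Your proposal is correct and follows essentially the same route as the paper's proof: both reduce the problem to a single Cramer system for one boundary slice, whose coefficient collapses via the left and right forms of the recurrence (\ref{recform}) to an invertible multiple of $\delta_{\ell+2}$, generically invertible by Remark \ref{remark4.1}. The only cosmetic difference is that you solve for $\varphi=\mathbf{u}|_{[t_0,t_0+\varepsilon[}$ while the paper solves for the equivalent unknown $\mathbf{p}=c_1\varphi$ (the restriction of $\mathbf{f}$ to $[t_0-\varepsilon,t_0[$), which is why your coefficient reads $-c_1\delta_{\ell+2}$ rather than the paper's $-\delta_{\ell+2}c_1^{-1}$.
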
 
\begin{proof}
We may assume by density that $c_1$ and $c_{-1}$ are invertible. Let be given a function 
$\mathbf{g}:[t_0,t_f]\rightarrow\mathbb{C}^{d}$ and let us construct an appropriate extension 
$\mathbf{f}:\mathbb{R}\rightarrow\mathbb{C}^{d}$ of $\mathbf{g}$ as follows. We set 
\begin{center}
$\mathbf{f}(t)=\left\{\begin{array}{ll}
0 & \mbox{ if }t<t_0-\varepsilon\\
\mathbf{p}(t) & \mbox{ if }t_0-\varepsilon\leq t<t_0\\
\mathbf{g}(t) & \mbox{ if }t_0\leq t\leq t_f\\
\mathbf{q}(t) & \mbox{ if }t_f<t\leq t_f+\varepsilon\\
0 & \mbox{ if }t>t_f+\varepsilon
\end{array}\right.$
\end{center}
for some auxiliary functions $\mathbf{p}$ and $\mathbf{q}$ with values in $\mathbb{C}^d$. The proof consists in showing that
 $\mathbf{f}$ lies in the range of $\Box$ if and only if $\mathbf{p}$ and $\mathbf{q}$ are uniquely determined by $\mathbf{g}$. 
We keep the notations of the previous proof. When $\alpha=0$, the formulas (\ref{bound}) to (\ref{solbound2}) still hold, we
 have $\theta=0$ and the auxiliary function $\varphi$ does not occur anymore. Let us solve the two equations (\ref{bound3}) 
and (\ref{bound4}) w.r.t. the two restrictions $\mathbf{p}$ and $\mathbf{q}$ of $\mathbf{f}$. 
To write these equations, we introduce the operators 
\begin{center}
$\displaystyle \Delta_{\ell}\mathbf{f}(\tau)=\sum_{k=1}^{\ell-1}\delta_{\ell-k}c_1^{-1}\mathbf{f}(\tau+k\varepsilon)$.
\end{center} 
Let us notice that these operators may be thought as discrete integral operators. Plugging formula (\ref{solsafe}),
 with the maximal value $\ell$ allowed for $n$, in the equation (\ref{bound3}), we get for all $t\in]t_f-\varepsilon,t_f]$
\begin{center}
$\begin{array}{rcl}
\mathbf{f}(t) & = & 
c_0(\delta_{\ell}\mathbf{u}(\tau+\varepsilon)+\delta_{\ell-1}\gamma\mathbf{u}(\tau)+\Delta_{\ell}\mathbf{f}(\tau))+\\
 & & c_{-1}(\delta_{\ell-1}\mathbf{u}(\tau+\varepsilon)+\delta_{\ell-2}\gamma\mathbf{u}(\tau)+\Delta_{\ell-1}\mathbf{f}(\tau)),
\end{array}$
\end{center}
$t$ and $\tau$ being connected as previously by the requirements that $\tau\in[t_0,t_0+\varepsilon[$ and
 $\frac{t-\tau}{\varepsilon}\in\mathbb{N}$. Remembering the values of $\beta,\gamma$ given by (\ref{betagamma})
 and the recurrence (\ref{recform}), we get:
\begin{equation}
\mathbf{f}(t)=-c_{1}(\delta_{\ell+1}\mathbf{u}(\tau+\varepsilon)+
\delta_\ell\gamma\mathbf{u}(\tau))+c_0\Delta_\ell\mathbf{f}(\tau)+c_{-1}\Delta_{\ell-1}\mathbf{f}(\tau).
\label{ft}
\end{equation}
Since we may respectively rewrite (\ref{solbound1}) and (\ref{solbound2}) as
\begin{center}
$\mathbf{u}(\tau)=c_1^{-1}\mathbf{p}(\tau)$ and 
$\mathbf{u}(\tau+\varepsilon)=c_1^{-1}\mathbf{f}(\tau)-c_1^{-1}c_0c_1^{-1}\mathbf{p}(\tau)$,
\end{center}
we see that the coefficient of $\mathbf{p}=\mathbf{f}(\tau-\varepsilon)$ in (\ref{ft}) is given by:
\begin{center}
$(\delta_{\ell+1}c_1^{-1}c_0-\delta_\ell\gamma)c_1^{-1}=-\delta_{\ell+2}c_1^{-1}$.
\end{center}
We assume the triplet $(c_{-1},c_0,c_1)$ lies in the open dense subset where $\det(\delta_{\ell+2})\neq 0$ and thus,
 we have shown that the restriction $\mathbf{p}$ exists and is unique. This being done, the equation (\ref{bound3}) yields 
\begin{center}
$\mathbf{f}(t+\varepsilon)=\mathbf{q}(t)=c_{-1}(\delta_{\ell}\mathbf{u}(\tau+\varepsilon)
+\delta_{\ell-1}\gamma\mathbf{u}(\tau)+\Delta_{\ell}\mathbf{f}(\tau))$,
\end{center}
$t$ and $\tau$ being connected as before, which is well-determined. As a consequence, the formula (\ref{solsafe}) gives rise to
 one unique function $\mathbf{u}$ on $[t_0,t_f]$ such that $\Box\mathbf{u}=\mathbf{f}$ and this ends the proof.\end{proof}

\section{A matricial-based framework when $N$ is arbitrary}

Let us use a matricial-based framework for solving the functional equation for arbitrary $N$. 
In this section we assume throughout that $c_{N}$ and $\alpha$ are invertible, mainly for the ease of exposition. 
We shall say that the ($2N+2$)-tuple $(c_{-N},\ldots,c_0,\ldots,c_N,\alpha)$ is \textit{generic} in $(\mathbb{C}^{d\times d})^{2N+2}$ 
provided the foregoing system (\ref{sys1}) is Cramer. This system (\ref{sys1}) consists in $N$ equations w.r.t. $N$ unknowns, 
generalizing the auxiliary function $\varphi$ occuring in Section 5. Its coefficients are matricial polynomial functions w.r.t. 
$c_{N}^{-1}(c_0+\alpha)$ and the
 $2N-1$ matricial indeterminates $c_N^{-1}c_k$, $k\in\{-N,\ldots,N-1\}$ and $k\neq 0$, which in a sense generalize (\ref{rdl}). 
\begin{theorem}
If the ($2N+2$)-tuple $(c_{-N},\ldots,c_0,\ldots,c_N,\alpha)$ is generic, the functional equation (\ref{eq1}) admits one 
and only one solution.
\label{thm6-1}
\end{theorem}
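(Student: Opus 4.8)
The plan is to generalize the scheme carried out for $N=1$ in Section 5, replacing the pair of auxiliary restrictions $\mathbf{u}(\tau),\mathbf{u}(\tau+\varepsilon)$ and the single auxiliary function $\varphi$ by $2N$ restrictions of $\mathbf{u}$ near $t_0$ and $N$ auxiliary functions near $t_0-N\varepsilon$, then to show the resulting $N\times N$ linear system (\ref{sys1}) in these unknowns is Cramer precisely on the announced generic set. First I would use density to assume $c_N$ invertible (already granted) and inspect formula (\ref{CMA}). Outside $[t_0-N\varepsilon,t_f+N\varepsilon]$ we read off $\mathbf{u}=\alpha^{-1}\mathbf{f}$. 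On the inner block $t_0+N\varepsilon\le t\le t_f-N\varepsilon$, equation (\ref{eq2}) rewrites, after multiplying by $c_N^{-1}$, as a stationary matricial recurrence of order $2N$,
\begin{equation*}
\mathbf{u}(\tau+(n+N)\varepsilon)=-\sum_{j=0}^{2N-1}c_N^{-1}\Bigl(c_{N-1-j}+\delta_{j,N-1}\alpha\Bigr)\mathbf{u}(\tau+(n+j)\varepsilon)+c_N^{-1}\mathbf{f}(\tau+(n+N-1)\varepsilon),
\end{equation*}
for $\tau\in[t_0,t_0+\varepsilon[$. Writing this in companion form $\mathbf{w}_{n+1}=\mathbf{M}\mathbf{w}_n+\mathbf{g}_n$ with $\mathbf{w}_n\in(\mathbb{C}^d)^{2N}$ and applying Lemma \ref{lem1-sagv} (in its stationary guise (\ref{sagv})), I obtain every value $\mathbf{u}(\tau+n\varepsilon)$, $N\le n\le \ell(\tau)$, as an explicit linear combination of the $2N$ seed restrictions $\mathbf{u}(\tau),\ldots,\mathbf{u}(\tau+(2N-1)\varepsilon)$ together with a discrete-integral term in $\mathbf{f}$; the matrix coefficients are the relevant blocks of $\mathbf{M}^{n}$, which by construction are matricial polynomial functions of $c_N^{-1}(c_0+\alpha)$ and the $c_N^{-1}c_k$. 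This is the direct analogue of (\ref{solsafe}).

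Next I would treat the $2N$ boundary layers near $t_0$, namely the strips $t_0+(p-N)\varepsilon\le t<t_0+(p+1-N)\varepsilon$ for $p=0,\ldots,2N-1$ in (\ref{CMA}). Each such strip gives a relation expressing one seed restriction $\mathbf{u}(\tau+p\varepsilon)$ in terms of the lower ones, the still-free auxiliary data, and $\mathbf{f}$; solving these $2N$ relations in cascade (using $c_N$ invertible at the top and $\alpha$ invertible where $\alpha+c_0$ or $\alpha$ appears) expresses all $2N$ seed restrictions as affine functions of the $N$ auxiliary functions $\varphi_1,\ldots,\varphi_N$ (the restrictions of $\mathbf{u}$ to $[t_0-N\varepsilon,t_0-(N-1)\varepsilon[,\ldots,[t_0-\varepsilon,t_0[$) and of $\mathbf{f}$ — exactly as (\ref{solbound1})–(\ref{solbound2}) did for $N=1$. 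Then the $2N$ boundary layers near $t_f$ (the strips $t_f+(N-p-1)\varepsilon<t\le t_f+(N-p)\varepsilon$) furnish $2N$ further relations whose right-hand sides are already known via (\ref{solsafe}); the last $N$ of them (those involving $c_{-N},\ldots$ near $t_f+N\varepsilon$) are solvable for $\mathbf{u}$ itself once the $\varphi_i$ are known, while the first $N$ must be read as $N$ compatibility equations in the $N$ unknowns $\varphi_1,\ldots,\varphi_N$. Collecting the coefficients of the $\varphi_i$ through the chain of substitutions yields the square system (\ref{sys1}); as in Section 5, repeated use of a recurrence of type (\ref{recform}) collapses the coefficient blocks into products of the matrices $\delta_n$ (now the higher-order analogues from the order-$2N$ recurrence) conjugated by $c_N$ and multiplied by $\alpha$, so that $\det$ of the system matrix equals, up to the invertible factors $\det(c_N)^{\pm}$ and $\det(\alpha)$, a fixed polynomial in the entries of the $c_k$ and $\alpha$ built from these $\delta_n$'s.

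The crux — and the main obstacle — is to certify that this determinant is not identically zero as a polynomial in the $2d^2(2N+2)$ scalar entries, so that Theorem \ref{ilpmm} applies and the generic locus is open dense. This is where I would invoke Theorem \ref{ilpmm}: each entry of the system matrix is (a specialization at the indeterminates $c_N^{-1}(c_0+\alpha)$, $c_N^{-1}c_k$ of) a generic matricial polynomial function, and its determinant, being a nonzero polynomial whenever each such building block is generic, defines a codimension-$\ge1$ algebraic variety whose complement is open dense; intersecting with the open dense loci $\{\det c_N\ne0\}$, $\{\det\alpha\ne0\}$ gives the asserted generic set. The delicate point is genericity of the relevant $\psi^{(k)}$: one must exhibit, for the specific combinations of $\delta_n$'s that appear, at least one nonzero sum $K_{pq}$ and one nonzero $\tilde K_{p,q,r,s}$ — equivalently, check non-vanishing on the scalar specialization $(\zeta,\xi)=(\rho I_d,\sigma I_d)$ and on the $2\times 2$ upper-triangular specialization of the proof of Theorem \ref{ilpmm}. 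For that I would exploit the closed-form scalar version: when all $c_k^{-1}c_j$ and $c_N^{-1}(c_0+\alpha)$ are scalars, the recurrence (\ref{recform})-type data is governed by the characteristic polynomial of a companion matrix with distinct roots for generic scalars, hence the scalar specialization of the determinant is a nonzero rational expression in those roots — the same mechanism used in Remark \ref{remark4.1}. Once non-vanishing of the determinant polynomial is secured, Cramer's rule solves (\ref{sys1}) for $\varphi_1,\ldots,\varphi_N$ as linear combinations of finitely many values of $\mathbf{f}$, back-substitution through (\ref{solsafe}) and the boundary relations determines $\mathbf{u}$ everywhere, and the remaining layer $]t_f+(N-1)\varepsilon,t_f+N\varepsilon]$ gives $\mathbf{u}=\alpha^{-1}(\mathbf{f}-c_{-N}\mathbf{u}(\cdot-\varepsilon))$; existence and uniqueness on the generic set follow, completing the proof.
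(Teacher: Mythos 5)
Your reduction of the problem to an $N\times N$ compatibility system in the auxiliary restrictions $\varphi_1,\ldots,\varphi_N$ is essentially the paper's: the paper packages the order-$2N$ recurrence into the block companion matrix $\mathbf{A}(t)$ of (\ref{data3}), which is locally constant so that the boundary layers near $t_0$ are absorbed into the non-stationary product $\mathbf{C}_n(t)$ of Lemma \ref{lem1-sagv} rather than handled by a separate cascade as you do, and it then reads the $N$ equations (\ref{sys3}) near $t_f$ as the linear system (\ref{sys1}) for the $\varphi_k$; once that system is Cramer, back-substitution and (\ref{sys4}) (using $\det\alpha\neq 0$) finish the argument exactly as you describe. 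Up to this point your proposal is correct and matches the paper. (A minor slip: the time indices in your displayed order-$2N$ recurrence are mutually consistent only when $N=1$; the forcing term should be $\mathbf{f}$ evaluated at the same instant $t$ at which (\ref{eq2}) is written, with $\mathbf{u}(t+N\varepsilon)$ isolated on the left.)

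The genuine gap is in what you call the crux. First, that step is not required: the paper \emph{defines} the $(2N+2)$-tuple to be generic precisely when (\ref{sys1}) is Cramer, i.e.\ when the block $\Theta_1$ of $\mathbf{D}\mathbf{C}_M(\tau-N\varepsilon)$ is invertible, so Theorem \ref{thm6-1} is a conditional statement and needs no non-vanishing argument. Second, and more importantly, the resolution you propose does not go through: Theorem \ref{ilpmm} is stated and proved only for matricial polynomial functions of \emph{two} matricial indeterminates $(\zeta,\xi)$ --- its proof rests on the specialization (\ref{equation14}) and on the coefficients $K_{pq}$ and $\tilde{K}_{p,q,r,s}$ of a two-letter word expansion --- whereas for general $N$ the entries of $\Theta_1$ are polynomials in the $2N+1$ matricial indeterminates $c_N^{-1}(\alpha+c_0)$ and $c_N^{-1}c_k$, and the coefficient blocks are no longer the $\delta_n$ of Lemma \ref{lem2-sagv}. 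Extending Theorem \ref{ilpmm} to that setting and certifying genericity of the specific polynomials that arise is exactly what the authors leave as an open conjecture at the end of Section 7; your scalar-specialization sketch would at best exhibit one nonzero coefficient of type $K_{pq}$ for each entry, which does not by itself give the non-vanishing of $\det\Theta_1$ as a polynomial in all $2d^2(2N+2)$ scalar variables. So either restrict yourself to the conditional statement actually asserted --- in which case your argument is complete --- or acknowledge that the open-density of the generic locus for $N\geq 2$ remains unproved.
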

Similarly, we shall say that the ($2N+1$)-tuple $(c_{-N},\ldots,c_0,\ldots,c_N)$ is
 \textit{generic} in $(\mathbb{C}^{d\times d})^{2N+1}$ provided the foregoing matrix $\Theta$ defined by (\ref{Theta}) is invertible. 
This matrix is associated to a linear system of $N$ equations w.r.t. $N$ unknowns $\mathbf{p}_1,\ldots\mathbf{p}_N$, 
which are the restrictions of $\mathbf{f}$ to parts of $[t_0-N\varepsilon,t_0[$ in order that
 $\mathbf{f}$ lies in the range of 
the operator $\Box$. 
\begin{theorem}
If the ($2N+1$)-tuple $(c_{-N},\ldots,c_0,\ldots,c_N)$ is generic, for all $\mathbf{g}:[t_0,t_f]\rightarrow \mathbb{C}^d$ 
there exists one and only one extension $\mathbf{f}:\mathbb{R}\rightarrow\mathbb{C}^d$ such that 
$Supp(\mathbf{f})\subset[t_0-N\varepsilon,t_f+N\varepsilon]$ and
 the functional equation $\mathbf{f}=\Box\mathbf{u}$ admits one and only one solution in $[t_0,t_f]$.
\label{thm6-2}
\end{theorem}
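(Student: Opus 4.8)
The plan is to mimic the structure of the proof of the preceding theorem (the $N=1$, $\alpha=0$ case) but to carry the bookkeeping in a genuinely matricial format. First I would fix $\mathbf{g}:[t_0,t_f]\to\mathbb{C}^d$ and look for an extension $\mathbf{f}$ supported in $[t_0-N\varepsilon,t_f+N\varepsilon]$; by Assertion 1 of the first theorem this support constraint is exactly the one forced by $\mathrm{Im}(\Box)$, so the extension must be specified on the two flanking strips $[t_0-N\varepsilon,t_0[$ and $]t_f,t_f+N\varepsilon]$. Subdividing each strip into $N$ sub-intervals of length $\varepsilon$, I name the $N$ unknown pieces on the left $\mathbf{p}_1,\ldots,\mathbf{p}_N$ and the $N$ unknown pieces on the right $\mathbf{q}_1,\ldots,\mathbf{q}_N$. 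On $[t_0,t_0+N\varepsilon[$, equation (\ref{eq2}) with $\alpha=0$ triangularly expresses $\mathbf{u}$ in terms of $\mathbf{g}$ and $\mathbf{p}_1,\ldots,\mathbf{p}_N$ — this generalizes (\ref{solbound1})–(\ref{solbound2}), using that $c_N$ is invertible so the leading coefficient can be inverted row by row up the list in (\ref{CMA}). In the bulk interval $[t_0+N\varepsilon,\,t_f-N\varepsilon]$ the equation is an order-$2N$ matricial recurrence; rewriting it as a first-order recurrence on $\mathbb{C}^{2Nd}$ and invoking Lemma \ref{lem1-sagv} (or rather the stationary specialization (\ref{sagv}) after the companion-block reduction, exactly as in Lemma \ref{lem4-sagv}) propagates $\mathbf{u}$ across the bulk as a fixed linear combination of the $2N$ ``initial'' pieces on $[t_0,t_0+N\varepsilon[$ plus a discrete convolution of $\mathbf{g}$.

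Next I would write down the $2N$ ``boundary'' equations near $t_f$: the $N$ rows of (\ref{CMA}) of the form $c_{-N}\mathbf{u}(t-N\varepsilon)+\cdots+c_{p-N}\mathbf{u}(\cdots)=\mathbf{g}(t)$ for $t$ in the last $N$ sub-intervals of $[t_0,t_f]$, plus the $N$ rows $c_{-N}\mathbf{u}(t-N\varepsilon)+\cdots=\mathbf{q}_j(t)$ for $t$ in the $N$ sub-intervals of $]t_f,t_f+N\varepsilon]$. The second batch merely \emph{defines} $\mathbf{q}_1,\ldots,\mathbf{q}_N$ once $\mathbf{u}$ is known on $[t_f-N\varepsilon,t_f]$, so they impose no constraint; the first batch is a genuine linear system of $N$ equations in the $N$ unknown left-strip pieces $\mathbf{p}_1,\ldots,\mathbf{p}_N$ (everything else having been expressed through $\mathbf{g}$ and the $\mathbf{p}_j$). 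Assembling its $N\times N$ block coefficient matrix is the definition of the matrix $\Theta$ of (\ref{Theta}) referred to in the statement, and by hypothesis $(c_{-N},\ldots,c_N)$ is \emph{generic}, i.e. $\Theta$ is invertible. Hence the $\mathbf{p}_j$ are uniquely determined; back-substitution then gives $\mathbf{u}$ on all of $[t_0,t_f]$ via the bulk formula, and finally the $\mathbf{q}_j$ from the second batch. Outside $[t_0-N\varepsilon,t_f+N\varepsilon]$ we set $\mathbf{f}=0$, consistent with the support requirement, and uniqueness of $\mathbf{u}$ on $[t_0,t_f]$ follows from Assertion 3 of the first theorem since $c_N$ is invertible.

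The step I expect to be the main obstacle is exhibiting the matrix $\Theta$ explicitly and verifying that generic invertibility of $\Theta$ is a \emph{nonvacuous} condition — i.e. that $\Theta$, as a function of the $2N+1$ matrix parameters, is not identically singular. This is where Theorem \ref{ilpmm} is meant to be used: the entries of $\Theta$ are matricial polynomial functions in $c_N^{-1}(c_0)$ and the $c_N^{-1}c_k$, built from the $\delta_n$ of Lemma \ref{lem2-sagv} via the same three-fold application of the recurrence (\ref{recform}) that collapsed $\theta$ to $c_1\delta_{\ell+2}c_1^{-1}\alpha$ in the $N=1$ proof; one must check that the relevant telescoping still occurs for general $N$ so that $\det\Theta$ reduces to (a product of) nonzero polynomials in the matrix entries, whence $\{\det\Theta\neq 0\}$ is open dense by Theorem \ref{ilpmm}. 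A secondary but routine annoyance is keeping the index ranges straight in the two degenerate regimes $M<2N$ versus $M\ge 2N$ (paralleling (\ref{range1})–(\ref{range2})), but as in Section 5 the case of short $[t_0,t_f]$ is handled by direct inspection of (\ref{CMA}) and does not affect the genericity argument.
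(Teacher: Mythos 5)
Your proposal follows essentially the same route as the paper: parameterize the extension by the $N$ left pieces $\mathbf{p}_j$ and $N$ right pieces $\mathbf{q}_j$, express $\mathbf{u}$ on $[t_0,t_f]$ through the block-companion first-order recurrence and Lemma \ref{lem1-sagv}, extract from the boundary equations at $t_f$ an $N\times N$ block-linear system in the $\mathbf{p}_j$ whose coefficient matrix is exactly the $\Theta$ of (\ref{Theta}), solve it by the genericity (invertibility) hypothesis, and then read off the $\mathbf{q}_j$. The one caveat is that the ``main obstacle'' you single out --- proving that $\det\Theta$ is not identically zero so that genericity is a nonvacuous, open dense condition for $N\geq 2$ --- is not required by the theorem, which is conditional on genericity by definition, and the paper itself does not resolve it: it is explicitly left as a conjecture in the final section, so you should not expect Theorem \ref{ilpmm} to settle it in the form you describe.
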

\begin{proof} (Of both theorems.)\\
Let $\mathbf{u}:\mathbb{R}\rightarrow\mathbb{C}^d$ be a solution to equation (\ref{eq1}), related to $\mathbf{A},\mathbf{B}$
 and $\mathbf{U}$ defined over $\mathbb{R}$ as follows:
\begin{equation}
\mathbf{B}(t)=\begin{pmatrix}
c_N^{-1}\mathbf{f}(t)\\0\\\vdots\\0\end{pmatrix}\in\mathbb{C}^{2dN},~~~~\mathbf{U}(t)=\begin{pmatrix}
\mathbf{u}(t+(N-1)\varepsilon)\\\vdots\\\mathbf{u}(t)\\\vdots\\\mathbf{u}(t-N\varepsilon)\end{pmatrix}\in\mathbb{C}^{2dN},
\label{data2}
\end{equation}
{\footnotesize\begin{equation}
\mathbf{A}(t)=\begin{pmatrix}-c_N^{-1}c_{N-1}\chi(t+(N-1)\varepsilon) & \ldots & 
-c_N^{-1}(\alpha+c_0\chi(t)) & \ldots & -c_N^{-1}c_{-N}\chi(t-N\varepsilon)\\
I_d & 0 & \ldots & \ldots & 0\\
0 & \ddots & \ddots & & \vdots\\
\vdots & \ddots & \ddots & \ddots & \vdots\\
0 & \ldots & 0 & I_d & 0
\end{pmatrix}.
\label{data3}
\end{equation}}
The matrix $\mathbf{A}(t)\in\mathbb{C}^{2dN\times 2dN}$ is locally constant w.r.t $t$. Especially, when 
$t\in[t_0+N\varepsilon,t_f-N\varepsilon]$, all the characteristic functions of the entries of $\mathbf{A}(t)$ are equal to 1 
and we shall agree to denote by $\mathbf{A}$ the constant value of $\mathbf{A}(t)$ in this segment. 
Let us mention that the matrix $\mathbf{A}$ depends only on the $2N+2$ matrices $c_i$ and $\alpha$ and 
is nothing but the block companion matrix of the functional equation (\ref{eq1}).

When $t<t_0-N\varepsilon$ or $t>t_f+N\varepsilon$, we have already seen that the solution to the equation (\ref{eq1}) is given 
by $\mathbf{u}(t)=\alpha^{-1}\mathbf{f}(t)$. Now, if $t\in[t_0-N\varepsilon,t_f-N\varepsilon]$, (\ref{eq2}) may be written as
\begin{eqnarray}
\mathbf{u}(t+N\varepsilon)=c_N^{-1}[\mathbf{f}(t)-(c_{N-1}\chi(t+(N-1)\varepsilon)\mathbf{u}(t+(N-1)\varepsilon)+\ldots\nonumber\\
+(\alpha+c_0\chi(t))\mathbf{u}(t)+\ldots+c_{-N}\chi(t-N\varepsilon)\mathbf{u}(t-N\varepsilon))].\hskip 2cm
\label{func3}
\end{eqnarray}
When $t\in[t_0-N\varepsilon,t_0[$, this equation expresses $\mathbf{u}(t+N\varepsilon)$ explicitly as a 
combination of the $N$ auxiliary functions $\varphi_k(\tau)=\mathbf{u}(\tau-k\varepsilon)$, 
$1\leq k\leq N$, which are at this level unknown. Obviously, the functions $\varphi_k(\tau)$, $1\leq k\leq N$, 
stand for the last $N$ components of $\mathbf{U}(\tau)$. 
By using definitions (\ref{data2}) and (\ref{data3}), recurrence (\ref{func3}) may be rewritten as 
\begin{center}
$\mathbf{U}(t+\varepsilon)=\mathbf{A}(t)\mathbf{U}(t)+\mathbf{B}(t)$, $\forall t\in[t_0-N\varepsilon,t_f-N\varepsilon]$.
\end{center} 
The preceding recurrence and Lemma \ref{lem1-sagv} yield the following formula
\begin{equation}
\mathbf{U}(t+n\varepsilon)=\mathbf{C}_n(t)\mathbf{U}(t)+\mathbf{F}_n(t)
\label{Ut}
\end{equation}
for $t_0-N\varepsilon\leq t\leq t+(n-1)\varepsilon\leq t_f-N\varepsilon$, where
\begin{equation}
\mathbf{C}_n(t)=\left\{\begin{array}{ll}
\displaystyle \coprod_{k=0}^{n-1} \mathbf{A}(t+k\varepsilon) & \mbox{if }t+(n-1)\varepsilon<t_0+N\varepsilon,\\
\displaystyle \mathbf{A}^{n-N}\left(\coprod_{k=0}^{N-1} \mathbf{A}(t+k\varepsilon)\right) & \mbox{otherwise}
\end{array}\right.
\label{sagv-gen3}
\end{equation}
and 
\begin{equation}
\mathbf{F}_n(t)=\sum_{k=0}^{n-1}\left(\coprod_{\ell=k+1}^{n-1}\mathbf{A}(t+\ell\varepsilon)\right)\mathbf{B}(t+k\varepsilon).
\label{Fn}
\end{equation} 
Focusing on the first row of (\ref{Ut}), i.e. the first component $\mathbf{u}(t+(n+N-1)\varepsilon)$ of $\mathbf{U}(t+n\varepsilon)$, 
we emphasize on the fact that the unknown function $\mathbf{u}(t)$ may be expressed as a linear combination of the delayed functions 
$\mathbf{f}(t+k\varepsilon)$, $k\in\mathbb{Z}$ through $\mathbf{F}_n(t)$, as well as the auxiliary functions 
$\varphi_k(\tau)$, $1\leq k\leq N$, through $\mathbf{U}(t)$. 
So, as a first conclusion of these calculations, $\mathbf{u}(t)$ is well-determined in the whole interval $]-\infty,t_f]$, as a 
combination of the auxiliary functions $\varphi_k$.\\
When $t\in[t_f-N\varepsilon,t_f+N\varepsilon]$, we may deduce from (\ref{eq1}) and (\ref{CMA}) the two systems
 of $N$ functional equations which are analogous to (\ref{bound3}) and (\ref{bound4}), i.e.
{\footnotesize \begin{equation}
\left\{\begin{array}{ll}
\mathbf{f}(t)=\alpha\mathbf{u}(t)+c_{-N}\mathbf{u}(t-N\varepsilon)+\ldots+c_{N-1}\mathbf{u}(t+(N-1)\varepsilon) & 
\mbox{ in }]t_f-N\varepsilon,t_f-(N-1)\varepsilon]\\
\mathbf{f}(t)=\alpha\mathbf{u}(t)+c_{-N}\mathbf{u}(t-N\varepsilon)+\ldots+c_{N-2}\mathbf{u}(t+(N-2)\varepsilon) &
 \mbox{in }]t_f-(N-1)\varepsilon,t_f-(N-2)\varepsilon]\\
~~\vdots & \\
\mathbf{f}(t)=\alpha\mathbf{u}(t)+c_{-N}\mathbf{u}(t-N\varepsilon)+\ldots+c_{0}\mathbf{u}(t) & \mbox{in }]t_f-\varepsilon,t_f]\\
\end{array}
\right.
\label{sys3}
\end{equation}}  
and
{\footnotesize \begin{equation}
\left\{\begin{array}{ll}
\mathbf{f}(t)=\alpha\mathbf{u}(t)+c_{-N}\mathbf{u}(t-N\varepsilon)+\ldots+c_{-1}\mathbf{u}(t-\varepsilon) & 
\mbox{ in }]t_f,t_f+\varepsilon]\\
\mathbf{f}(t)=\alpha\mathbf{u}(t)+c_{-N}\mathbf{u}(t-N\varepsilon)+\ldots+c_{-2}\mathbf{u}(t-2\varepsilon) &
 \mbox{in }]t_f+\varepsilon,t_f+2\varepsilon]\\
~~\vdots & \\
\mathbf{f}(t)=\alpha\mathbf{u}(t)+c_{-N}\mathbf{u}(t-N\varepsilon) & \mbox{in }]t_f+(N-1)\varepsilon,t_f+N\varepsilon]\\
\end{array}
\right..
\label{sys4}
\end{equation}}  

Let us consider the case when $\alpha$ is invertible. We convert (\ref{sys3}) into a linear system for the $N$ auxiliary 
unknown functions $\varphi_k$. To do this, we note first that (\ref{Ut}), with $n=M$ and $t=\tau-N\varepsilon$, writes as
\begin{equation}
\mathbf{U}(\tau+(M-N)\varepsilon)=\mathbf{C}_M(\tau-N\varepsilon)\mathbf{U}(\tau-N\varepsilon)+\mathbf{F}_M(\tau-N\varepsilon).
\label{recU}
\end{equation}
The first $N$ components of $\mathbf{U}(\tau-N\varepsilon)$ which occur in the right-hand side of the previous equality are
 the auxiliary functions while
the last $N$ components are values of the function $\alpha^{-1}\mathbf{f}$ at instants $\tau-(N+k)\varepsilon$, $1\leq k\leq N$. 
Next, by shifting adequately $t$ in the system of equations (\ref{sys3}), we get 
for all $t\in]t_f-N\varepsilon,t_f-(N-1)\varepsilon]$ the following equality
\begin{equation}
\begin{pmatrix}\mathbf{f}(t)\\\mathbf{f}(t+\varepsilon)\\\vdots\\\mathbf{f}(t+(N-1)\varepsilon)\end{pmatrix}=\mathbf{D}\begin{pmatrix}
\mathbf{u}(t+(N-1)\varepsilon)\\\vdots\\\mathbf{u}(t-N\varepsilon)
\end{pmatrix} 
\label{matrixS}
\end{equation}
where
\begin{center}
{\small $\mathbf{D}=\begin{pmatrix} c_{N-1} & \ldots & \ldots & c_1 & (\alpha+c_0) & c_{-1} & \ldots & c_{-N}\\
c_{N-2} & \ldots & c_1 & (\alpha+c_0) & c_{-1} & \ldots & c_{-N} & 0\\
\vdots & \addots & \addots & \addots & & \addots & \addots & \vdots\\
c_1 & \addots & \addots & & \addots & \addots & & \vdots\\
(\alpha+c_0) & c_{-1} & \ldots & c_{-N} & 0 & \ldots & \ldots & 0
\end{pmatrix}\in\mathbb{C}^{Nd\times(2Nd)}$}
\end{center}
is a rectangular block Hankel matrix. By setting $t=\tau+(M-N)\varepsilon$ in (\ref{matrixS}) and using (\ref{recU}), we easily get
\begin{equation}
\mathbf{D}\mathbf{C}_M(\tau-N\varepsilon)\mathbf{U}(\tau-N\varepsilon)=
\begin{pmatrix}\mathbf{f}(\tau+(M-N)\varepsilon)\\\vdots\\\mathbf{f}(\tau+(M-1)\varepsilon)\end{pmatrix}-
\mathbf{D}\mathbf{F}_M(\tau-N\varepsilon).
\label{sys1}
\end{equation}
By partitioning the $(Nd)\times(2Nd)$ matrix $\mathbf{D}\mathbf{C}_M(\tau-N\varepsilon)$ as $(\Theta_1~\Theta_2)$, where 
$\Theta_1,\Theta_2\in\mathbb{C}^{Nd\times Nd}$, we see that the previous linear system consists in $N$ vectorial equations in 
$\mathbb{C}^d$ depending on the $N$ auxiliary functions $\varphi_k$ and is Cramer provided that $\Theta_1$ is invertible. 
Under the assumption of genericity, the previous condition is satisfied and thus, there exists one and only one solution to 
(\ref{sys1}). The auxiliary functions being determined, as a rule, the function $\mathbf{u}$ 
is well defined and unique on $]-\infty,t_f]$. Since $\alpha$ is also invertible by assumption, we may compute the 
various extensions of $\mathbf{u}$ on $[t_f,t_f+N\varepsilon]$ using the system (\ref{sys4}). 
This ends the proof of Theorem 
\ref{thm6-1}.

The proof of the second theorem proceeds in the same way. 
Let $\mathbf{p}_k$ be the unknown restriction of $\mathbf{f}$ to
 $[t_0-k\varepsilon,t_0-(k-1)\varepsilon[$, for all $k\in\{1,\ldots,N\}$. 
Similarly let us define $\mathbf{q}_k$ as the unknown restriction of $\mathbf{f}$ to 
$]t_f+(k-1)\varepsilon,t_f+k\varepsilon]$, for all $k\in\{1,\ldots,N\}$. 
When $\alpha$ is zero and $\det(c_N)\neq 0$, the system (\ref{CMA}) shows that $\mathbf{u}(t)$ may be arbitrarily
 chosen outside $[t_0,t_f]$. 
Indeed, the auxiliary functions $\varphi_k$ do not occur in the calculation of $\mathbf{u}$. 
However, $\mathbf{u}$ is well-determined in the whole interval $[t_0,t_f]$ provided $\mathbf{g}$ is given in 
this interval and the $\mathbf{p}_k$, $1\leq k\leq N$, are already determined. 
In order that $\mathbf{u}$ satisfies the systems (\ref{sys3}) and (\ref{sys4}), the restrictions
 $\mathbf{p}_k$ and $\mathbf{q}_k$, $1\leq k\leq N$, must be chosen adequately. 
Let us focus on (\ref{sys3}) or, equivalently, on (\ref{sys1}). 
Indeed, both systems still hold when $\alpha=0$. 
However, our approach is different from previously since the unknowns which are the $\mathbf{p}_k$'s are located exclusively
 in the term $-\mathbf{D}\mathbf{F}_M(\tau-N\varepsilon)$. 
Indeed, the above discussion shows that $\mathbf{U}(\tau-N\varepsilon)$ does not depend on the restrictions $\mathbf{p}_k$ 
nor the matrix $(\mathbf{f}(\tau+(M-k)\varepsilon))_{1\leq k\leq n}$. 
Let us determine the coefficient $[\Theta]_{ij}\in\mathbb{C}^{d\times d}$ of $\mathbf{p}_j(\tau)$ in the $i$-th equation in 
the system (\ref{sys1}). To do this, we use the convention that for some matrix $\Theta\in\mathbb{C}^{(Nd)\times (Nd)}$, the symbol $[\Theta]_{ij}$
 denotes the $(d\times d)$-block located at the ($i,j$)-entry of $\Theta$. Then, help to formulas (\ref{data2}) and (\ref{Fn}), we see that
\begin{equation}
[\Theta]_{ij}=-\left[D\coprod_{k=N-j+1}^{M-1}\mathbf{A}(\tau+(k-N)\varepsilon)\right]_{i1}c_N^{-1}.
\label{Theta}
\end{equation}
We assume that the multiplet $(c_{-N},\ldots,c_N)$ is generic, which amounts to say that $\Theta$ is invertible. 
Therefore, (\ref{sys1}) is Cramer and admits one and only one solution $(\mathbf{p}_1,\ldots,\mathbf{p}_N)$. 
Next, by using the same transformation which led to (\ref{matrixS}), the system (\ref{sys4}) may be rewritten as
\begin{center}
{\small $\begin{pmatrix}\mathbf{f}(t)\\\vdots\\\mathbf{f}(t+(N-1)\varepsilon)\end{pmatrix}=
\begin{pmatrix}\mathbf{q}_1(t)\\\vdots\\\mathbf{q}_N(t)\end{pmatrix}=\begin{pmatrix}c_{-1} & c_{-2} & \ldots & c_{-N}\\ 
c_{-2} & \ldots & c_{-N} & 0\\ \vdots & \addots & \addots & \vdots\\c_{-N} & 0 & \ldots & 0\end{pmatrix}
\begin{pmatrix}\mathbf{u}(t-\varepsilon)\\\vdots\\\mathbf{u}(t-N\varepsilon)\end{pmatrix}$},
\end{center}
for all $t\in]t_f,t_f+\varepsilon]$. 
Since the matrix $(\mathbf{u}(t-k\varepsilon))_{1\leq k\leq N}$ is known, we obtain as a conclusion that the restrictions
 $(\mathbf{p}_k,\mathbf{q}_k)$, $1\leq k\leq N$, are uniquely determined by $\mathbf{g}$, which ends the proof of Theorem \ref{thm6-2}.
\end{proof}

\section{Final comments}

Before concluding this paper, it is natural to ask if there exist other ways to handle the functional equation 
$\alpha\mathbf{u}+\mathcal{C}\star(\chi\mathbf{u})=\mathbf{f}$. 
Since (\ref{eq1}) is a convolution equation, one might want to use the discrete Laplace (or $Z$-) transform to solve it. 
Indeed, if the $Z$-transform is defined by 
$\mathcal{L}\mathbf{u}(p)=\sum_{t\in \varepsilon \mathbb{Z}}\exp (-pt)\mathbf{u}(t)$ then one has the formula
 $\mathcal{L}(\mathbf{u}\star \mathbf{v})(p)=(\mathcal{L}\mathbf{u})(\mathcal{L}\mathbf{v})$ for all 
functions $\mathbf{u}$ and $\mathbf{v}$ on the real line, with compact support and values in suitable algebras. 
So, when $\alpha=0$, the image 
of (\ref{eq1}) is $\mathcal{L}(\mathcal{C})\mathcal{L}(\chi\mathbf{u})=\mathcal{L}(f)$ and, thus, 
if $\mathcal{L}(\mathcal{C})$ is invertible, we get
\begin{center}
$\mathcal{L}(\chi \mathbf{u})(p)=\left( \sum_{k\in \mathbb{Z}}c_k\exp
(pk\varepsilon )\right) ^{-1}\left( \sum_{k\in \mathbb{Z}}\exp (-pk\varepsilon )%
\mathbf{f}(k\varepsilon )\right) $.
\end{center}
Unfortunately, the determination of the original $\mathbf{u}$ has been transferred to a difficult problem of moments. 
Moreover, this approach does not apply to the general case $\alpha \neq 0$.\\
Another way to proceed would be to apply the Picard's method. 
Indeed, one sees that (\ref{eq1}) is
equivalent to
\begin{center}
$\mathbf{u}=\alpha ^{-1}\mathbf{f}-\alpha ^{-1}\Box \mathbf{u}=\alpha ^{-1}%
\mathbf{f}-\alpha ^{-1}\Box (\alpha ^{-1}\mathbf{f}-\alpha ^{-1}\Box \mathbf{%
u})=\ldots$\\
$=\alpha ^{-1}\mathbf{f}+\sum_{k=1}^\infty (-1)^k\alpha ^{-k-1}\Box ^k%
\mathbf{f}$.
\end{center}
Thus if $\Vert \alpha ^{-1}\Vert $ is small enough, one may hope that $\Box $ is a contraction
 of a suitable Banach or Fr\'echet function space $\mathcal{F}$. 
The main difficulty in this approach is the requirement that the vector
space $\mathcal{F}$ is complete, and it is related to the increasing number
of points of discontinuity of $\Box ^k\mathbf{f}$, as $k$ increases.\\
By comparison, the approach developed in this paper is less complicated and more robust in the sense that the solution is
 given explicitly. 
Indeed, the central parts of the proofs of the four previous theorems highlight not only the well-posedness of (\ref{eq1})
 but also an effective computational way to construct interval by interval the solution $\mathbf{u}$.

In scope of future work, we suggest the three following problems. 
First, the formulas (\ref{Ut}) and (\ref{sagv-gen3}) imply that, in the very special case when $\mathbf{f}$ is constant, 
$\mathbf{u}(t)$ may be expressed through exponential-monomial functions inside $[t_0+N\varepsilon,t_f-N\varepsilon]$. 
So, in this case, the search for pseudo-periodic solutions of (\ref{eq1}) is linked to the difficult elimination problem
 of requiring that the spectrum $Sp(\mathbf{A})$ of $\mathbf{A}$ is included in the unit circle of $\mathbb{C}$. 
Second, we conjecture the following variant of Theorem \ref{ilpmm}. 
If the coefficients $\kappa_{m_1,m_2,\ldots}$ occuring in formula (\ref{rdl}) are rational or even algebraic over 
$\mathbb{Q}$ and, in contrast, the entries $\zeta_{i,j}$ and $\xi_{i,j}$ generate a subfield of $\mathbb{C}$ of
 transcendance degree equal to $2d^2$, 
then all the quantities $\psi_{i,j}^{(k)}(\zeta,\xi)$ and $\det(\psi^{(k)})(\zeta,\xi)$ do not vanish. 
Lastly, we conjecture also that for all $N\geq 2$, the genericity assumption on the multiplets in Theorems \ref{thm6-1} 
and \ref{thm6-2} hold on open dense subsets of appropriate products of matricial algebras. 
This conjecture seems natural in light of the two theorems presented in Section 5.



\end{document}